\documentclass[11pt]{article}

\usepackage{amssymb}   
\usepackage{amsthm}    
\usepackage{amsmath}   
\usepackage{stmaryrd}  
\usepackage{titletoc}  
\usepackage{mathrsfs}  

\vfuzz2pt 
\hfuzz2pt 

\newlength{\defbaselineskip}
\setlength{\defbaselineskip}{\baselineskip}
\newcommand{\setlinespacing}[1]%
           {\setlength{\baselineskip}{#1 \defbaselineskip}}

\theoremstyle{plain}
\newtheorem{thm}{Theorem}[section]

\newtheorem{lem}[thm]{Lemma}
\newtheorem{prop}[thm]{Proposition}

\theoremstyle{definition}
\newtheorem{defn}{Definition}[section]
\newtheorem{ass}{Assumption}[section]
\newtheorem{rmk}{Remark}[section]

\newcommand{\bR}{\mathbb{R}}

\newcommand{\bH}{\mathbb{H}}

\newcommand{\cL}{\mathcal{L}}
\newcommand{\cM}{\mathcal{M}}
\newcommand{\sF}{\mathscr{F}}
\newcommand{\sP}{\mathscr{P}}

\newcommand{\eps}{\varepsilon}
\newcommand{\vf}{\varphi}

\newcommand{\la}{\langle}
\newcommand{\ra}{\rangle}
\newcommand{\intl}{\interleave}

\newcommand{\rrow}{\rightarrow}

\textwidth =16cm \topmargin =-18mm \textheight =23.5cm \oddsidemargin=5pt
\evensidemargin=0pt

\makeatletter\@addtoreset{equation}{section} \makeatother

\begin{document}

\title {Notes on the Cauchy Problem for Backward Stochastic Partial
Differential Equations \footnotemark[1]}

\author{Kai Du\footnotemark[2] \and Qingxin Meng\footnotemark[2]
\footnotemark[3]}
\date{}

\footnotetext[1]{Supported by NSFC Grant \#10325101, Basic Research Program
of China (973 Program)  Grant \# 2007CB814904, Natural Science Foundation of
Zhejiang Province Grant \#606667.}

\footnotetext[2]{Department of Finance and Control Sciences, School of
Mathematical Sciences, Fudan University, Shanghai 200433, China.
\textit{E-mail}: \texttt{kdu@fudan.edu.cn} (Kai Du),
\texttt{071018034@fudan.edu.cn} (Qingxin Meng).}

\footnotetext[3]{Department of Mathematical Sciences, Huzhou Teacher College,
Huzhou 31300, China.}

\maketitle

\begin{abstract}
  Backward stochastic partial differential equations (BSPDEs) of parabolic type with
  variable coefficients are considered in the whole Euclidean space.
  Improved existence and uniqueness results are given in the Sobolev space
  $H^n$ ($=W^n_2$) under weaker assumptions than those used by
  X. Zhou [Journal of Functional Analysis 103, 275--293 (1992)].
  As an application, a comparison theorem is obtained.
\end{abstract}

AMS Subject Classification: 60H15, 35R60

Keywords: Backward stochastic partial differential equations; Cauchy
problems; Sobolev spaces

\section{Introduction}

In this paper, we consider the Cauchy problem for backward stochastic partial
different equations (BSPDEs) in divergence form
\begin{equation}\label{eq:a1}
  \left\{\begin{array}{l}
    \begin{split}
      d p (t,x)=-&\big\{\partial_{x^i}\big[
      a^{ij}(t,x)\partial_{x^j}p(t,x) +\sigma^{ik}(t,x)
      q^{k}(t,x)\big] +b^i(t,x)\partial_{x^i}p(t,x)\\
      &~-c(t,x)p(t,x)
      +\nu^k(t,x)q^k(t,x)+F(t,x)\big\}dt\\
      &~+q^k(t,x)dW^k_t,\quad~~~ (t,x)\in
    [0,T]\times \bR^{d},
    \end{split}\\
    \begin{split}
      p(T,x)=~\phi(x),~~~~~~x\in \bR^{d},
    \end{split}
  \end{array}\right.
\end{equation}
and in non-divergence form
\begin{equation}\label{eq:a2}
  \left\{\begin{array}{l}
    \begin{split}
      d p (t,x)=-&\big[
      a^{ij}(t,x)\partial^{2}_{x^i x^j}p(t,x) +b^i(t,x)\partial_{x^i}p(t,x)
      -c(t,x)p(t,x)\\
      &~+\sigma^{ik}(t,x)
      \partial_{x^i}q^{k}(t,x)
      +\nu^k(t,x)q^k(t,x)+F(t,x)\big]dt\\
      &~+q^k(t,x)dW^k_t,\quad~~~ (t,x)\in
    [0,T]\times \bR^{d},
    \end{split}\\
    \begin{split}
    p(T,x)=~\phi(x),~~~~~~x\in \bR^{d},
    \end{split}
  \end{array}\right.
\end{equation}
where $W \triangleq \{W^{k}_{t};t\geq 0\}$ is a $d_1$-dimensional Wiener
process generating a natural filtration $\{\mathscr{F}_{t}\}_{t\geq 0}$. The
coefficients $a,b,c,\sigma,\nu$ and the free term $F$ and the terminal
condition $\phi$ are all random functions. An adapted solution of equation
\eqref{eq:a1} or \eqref{eq:a2} is a $\mathscr{P}\times B(\bR^d)$-measurable
function pair $(p,q)$ satisfying equation \eqref{eq:a1} or \eqref{eq:a2}
under some appropriate sense, where $\mathscr{P}$ is the predictable
$\sigma$-algebra generated by $\{\mathscr{F}_{t}\}_{t\geq 0}$.

BSPDEs, a natural extension of backward SDEs (see e.g. \cite{KPQ97,PaPe90}),
originally arise in the optimal control of processes with incomplete
information, as adjoint equations (usually in the form of \eqref{eq:a1}) of
Duncan-Mortensen-Zakai filtration equations (see e.g.
\cite{Bens83,NaNi90,Tang98a,Tang98b,Zhou93}). In \cite{MaYo97}, an adapted
version of stochastic Feynman-Kac formula is established involving BSPDEs (in
the form of \eqref{eq:a2}), which has been found useful in mathematical
finance. A class of fully nonlinear BSPDEs, the so-called backward stochastic
Hamilton-Jacobi-Bellman equations, are also introduced in the study of
controlled non-Markovian processes by Peng \cite{Peng92a}. For more aspects
of BSPDEs, we refer to e.g. \cite{BRT03,EnKa09,Tang05,TaZh09,Tess96}.

In Zhou \cite{Zhou92}, A $W^{n}_{2}$-theory of the Cauchy problem for BSPDEs
of type \eqref{eq:a1} was established by the finite-dimensional approximation
(Galerkin's method) and a duality analysis on stochastic PDEs. Those results
are basically complete however not refined due to a strong requirement on the
coefficients. More specifically, the theory requires the boundedness of the
$n$th-derivatives of the coefficients (or even the $(n+1)$st-derivatives), to
reach the regularity that $p\in H^{n+1}$ and $q\in H^{n}$ with respect to
$x$. Comparing to the counterpart theory of PDEs, we believe that this
requirement is not natural.

In this paper, we establish an improved $W^{n}_{2}$-theory of the Cauchy
problem for BSPDEs of type \eqref{eq:a1} and \eqref{eq:a2}. First we refine
the existence and uniqueness result first given by Hu-Peng \cite{HuPe91}
concerning backward stochastic evolution equations in Hilbert spaces. Then we
use it to prove the existence and uniqueness of the weak solution (see
Definition \ref{defn:b1}) of equation \eqref{eq:a1}. Following this result,
we obtain the the existence, uniqueness and regularity of the strong solution
(see Definition \ref{defn:b1}) of equation \eqref{eq:a2}, under much weaker
assumptions on the coefficients than those used by Zhou \cite{Zhou92}, by
applying some classical techniques from the theory of PDEs instead of duality
analysis. Our improvements are natural and substantial. When the equations
are deterministic, our results coincide with the counterpart theory of PDEs.
As an application of our results, we prove a comparison theorem for the
strong solution of equation \eqref{eq:a2}, which, in some sense, improves the
results obtained by Ma-Yong \cite{MaYo99}.

This paper is organized as follows. In Section 2, we present our main results
(Theorems \ref{thm:b}, \ref{thm:b1} and \ref{thm:b2}), and prove Theorem
\ref{thm:b2}. In Section 3, we discuss backward stochastic evolution
equations in Hilbert spaces, and then prove Theorem \ref{thm:b}. In Section
4, we complete the proof of Theorem \ref{thm:b1}. Finally in Section 5, we
prove a comparison theorem for the strong solution of equation \eqref{eq:a2}.

\section{Main results}

Let $(\Omega,\mathscr{F},\{\mathscr{F}_{t}\}_{t\geq 0},P)$ be a complete
filtered probability space on which is defined a $d_1$-dimensional Wiener
process $W=\{W_t;t\geq 0\}$ such that $\{\mathscr{F}_{t}\}_{t\geq 0}$ is the
natural filtration generated by $W$, augmented by all the $P$-null sets in
$\mathscr{F}$. Fix a positive number $T$. Denote by $\mathscr{P}$ the
$\sigma$-algebra of predictable sets on $\Omega \times (0,T)$ associated with
$\{\mathscr{F}_{t}\}_{t\geq 0}$.

For the sake of convenience, we denote $$D_{i}=\partial_{x^i},\quad
D_{ij}=\partial^{2}_{x^i x^j},\quad i,j=1,\dots,d,$$ and for any multi-index
$\alpha=(\alpha_1,\dots,\alpha_d)$
$$D^{\alpha}=( \partial_{x^1} )^{\alpha_1}
( \partial_{x^2} )^{\alpha_2} \cdots ( \partial_{x^d} )^{\alpha_d}, \quad
|\alpha|=\alpha_1+\cdots +\alpha_d.$$ Moreover, denote by $Du$ and $D^{2}u$
respectively the gradient and the Hessian matrix for the function $u$ defined
on $\mathbb{R}^{d}$. We will also use the summation convention.

Throughout the paper, by saying that a vector-valued or matrix-valued
function belongs to a function space (for instance, $Du\in L^{2}(\bR^{d})$),
we mean all the components belong to that space.

Let $n$ be an integer. Let $H^{n}=H^{n}(\bR^{d})$ ($n\neq 0$) be the Sobolev
space $W^{n}_{2}(\bR^{d})$. We denote
\begin{eqnarray*}
  \begin{split}
    &H^{0}=L^{2}=H^{0}(\bR^{d})=L^{2}(\bR^{d}),\\
    &\bH^{n}=\bH^{n}(\bR^{d})=L^{2}(\Omega\times(0,T),\sP,H^{n}).
  \end{split}
\end{eqnarray*}
In addition, denote
  $\|\cdot\|_{n}=\|\cdot\|_{H^{n}}$.
Moreover, for a function $u$ defined on $\Omega\times (0,T)\times\bR^d$, we
denote
$$\intl u \intl_{n}^{2} = E \int_{0}^{T}\|u(t,\cdot)\|_{n}^{2}dt.$$
The same notations will be used for vector-valued and matrix-valued
functions, and in the case we denote $|u|^{2}=\sum_{i}|u^{i}|^{2}$ and
$|u|^{2}=\sum_{ij}|u^{ij}|^{2}$, respectively.
\medskip

Let us now turn to the notions of solutions to equations \eqref{eq:a1} and
\eqref{eq:a2}.

\begin{defn}\label{defn:b1}
  A $\mathscr{P}\times B(\bR^{d})$-measurable function pair $(p,q)$ valued
  in $\mathbb{R}\times \mathbb{R}^{d_1}$ is called

  (i) a weak
  solution of equation \eqref{eq:a1}, if $p\in \mathbb{H}^{1}$ and
  $q \in \mathbb{H}^{0}$, such that for every $\eta \in
  H^{1}$ (or $C_{0}^{\infty}(\bR^d)$) and almost every
  $(\omega,t)\in\Omega\times[0,T]$,
  it holds that
  \begin{eqnarray}\label{eq:b1}
    \begin{split}
      \int_{\bR^d}p(t,x)\eta(x)dx = & \int_{\bR^d}\phi(x)\eta(x)dx
      +\int_{t}^{T}\int_{\bR^{d}}\bigg{\{}
      D_{i}\big[a^{ij}(t,x)D_{j}p(t,x)+\sigma^{ik}(t,x)q^{k}(t,x)\big]\\
      &+b^{i}(t,x)D_{i}p(t,x) -c(t,x)p(t,x)+\nu^k(t,x)q^k(t,x)\\
      &+F(t,x)\bigg{\}}\eta(x)dx dt -\int_{t}^{T}\int_{\bR^d}q^k(t,x)\eta(x)
      dx dW^k_t;
    \end{split}
  \end{eqnarray}

  (ii) a strong solution of equation \eqref{eq:a2}, if
  $p\in \bH^{2},~
  q\in \bH^{1}$ and
  $p\in C([0,T],L^{2}(\bR^{d}))~(a.s.)$ such that
  for all $t\in [0,T]$ and a.e. $x\in\bR^{d}$, it holds almost surely that
  \begin{eqnarray}\label{eq:b1b}
    \begin{split}
      p(t,x) = & \phi(x)
      +\int_{t}^{T}\big[
      a^{ij}(t,x)D_{ij}p(t,x)+b^{i}(t,x)D_{i}p(t,x)-c(t,x)p(t,x)\\
      & +\sigma^{ik}(t,x)D_{i}q^{k}(t,x)+\nu^k(t,x)q^k(t,x)
      +F(t,x)\big] dt -\int_{t}^{T}q^k(t,x) dW^k_t.
    \end{split}
  \end{eqnarray}
\end{defn}

\medskip

Now fix some constants $K\in (1,\infty)$ and $\kappa\in (0,1)$.

\begin{ass}\label{ass:b1}
  The given functions $a,b,c,\sigma,\nu$ and $F$ are $\mathscr{P} \times
  B(\bR^{d})$-measurable with values in the set of real symmetric $d\times
  d$ matrices, $\mathbb{R}^{d}$, $\mathbb{R}$, $\mathbb{R}^{d\times
  d_{1}}$, $\mathbb{R}^{d_1}$, and $\mathbb{R}$, respectively. The real
  function $\phi$ is $\mathscr{F}_T\times B(\bR^{d})$-measurable.
\end{ass}

\begin{ass}\label{ass:b2}
  We assume the super-parabolic condition, i.e.,
  \begin{equation*}
    \kappa I+(\sigma^{ik})(\sigma^{ik})^{*}
    \leq 2(a^{ij}) \leq K I, ~~~\forall~
    (\omega,t,x)\in \Omega\times [0,T]\times\bR^{d}.
  \end{equation*}
\end{ass}

Then we have the following result concerning the existence and uniqueness of
the weak solution of equation \eqref{eq:a1}. The proof of this theorem will
be given in Section 3.

\begin{thm}\label{thm:b}
  Let the functions $a^{ij},b^{i},c,\sigma^{ik}$ and $\nu^{k}$
  satisfy Assumptions \ref{ass:b1}
  and \ref{ass:b2}, and be bounded by $K$. Suppose
  $$F\in \bH^{-1},~~~~ \phi \in
  L^{2}(\Omega,\mathscr{F}_{T},L^{2}).$$
  Then equation \eqref{eq:a1} has a unique weak solution $(p,q)$ in the
  space $\mathbb{H}^{1}\times\mathbb{H}^{0}$
  such that $p\in C([0,T],L^{2})~(a.s.)$, and
  \begin{eqnarray}\label{leq:b1}
    \intl p \intl_{1}^{2} +\intl q \intl_{0}^{2}
    +E\sup_{t\leq T}\|p(t,\cdot)\|_{0}^{2}
    \leq C\big{(} \intl F \intl_{-1}^{2} + E \|
    \phi\|_{0}^{2}\big{)},
  \end{eqnarray}
  where the constant $C=C(K,\kappa,T)$.
\end{thm}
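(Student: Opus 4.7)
I would recast \eqref{eq:a1} as an abstract backward stochastic evolution equation (BSEE) in the Gelfand triple $V = H^{1} \hookrightarrow H = L^{2} \hookrightarrow V^{*} = H^{-1}$ and invoke the refined Hu--Peng-type existence/uniqueness theorem for BSEEs to be developed in Section~3. Define random time-dependent bounded linear operators $\mathcal{A}(t,\omega): V \to V^{*}$ and $\mathcal{B}(t,\omega): L^{2}(\bR^{d};\bR^{d_{1}}) \to V^{*}$ by
\[
  \mathcal{A}(t) p = -D_{i}\bigl(a^{ij}(t)\,D_{j} p\bigr) - b^{i}(t)\,D_{i} p + c(t)\,p,
  \qquad
  \mathcal{B}(t) q = -D_{i}\bigl(\sigma^{ik}(t)\,q^{k}\bigr) - \nu^{k}(t)\,q^{k},
\]
so that \eqref{eq:a1} reads $-dp = [\mathcal{A} p + \mathcal{B} q + F]\,dt - q^{k}\,dW^{k}_{t}$ with $p(T) = \phi$; boundedness of $\mathcal{A}$ and $\mathcal{B}$ is immediate from the $L^{\infty}$-bound $K$ on the coefficients.

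The heart of the matter is verifying the coercivity inequality required by the abstract theorem. Integrating by parts,
\[
  2\langle \mathcal{A} p, p\rangle + 2\langle \mathcal{B} q, p\rangle + \|q\|_{0}^{2}
  = \langle (2a - \sigma\sigma^{*})Dp, Dp\rangle + \|\sigma^{*}Dp + q\|_{0}^{2} + R(p,q),
\]
where $R(p,q) = -2\langle b^{i} D_{i} p, p\rangle + 2\langle cp, p\rangle - 2\langle \nu^{k} q^{k}, p\rangle$ is bounded in modulus by $C(K)\bigl(\|p\|_{1}\|p\|_{0} + \|p\|_{0}^{2} + \|q\|_{0}\|p\|_{0}\bigr)$. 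Assumption~\ref{ass:b2} forces the first summand $\geq \kappa\|Dp\|_{0}^{2}$, while Young's inequality yields $\|\sigma^{*}Dp + q\|_{0}^{2} \geq (1-\mu)\|q\|_{0}^{2} - (\mu^{-1}-1) K\|Dp\|_{0}^{2}$ for any $\mu \in (0,1)$. A convex combination of these two lower bounds with $\mu$ tuned to $\kappa$ and $K$, followed by absorbing $R$ via Young, produces the coercivity
\[
  2\langle \mathcal{A} p, p\rangle + 2\langle \mathcal{B} q, p\rangle + \|q\|_{0}^{2}
  \geq \kappa'\|p\|_{1}^{2} + \delta'\|q\|_{0}^{2} - C(K,\kappa)\|p\|_{0}^{2},
\]
with $\kappa', \delta' > 0$ depending only on $\kappa, K$. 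This is precisely the hypothesis required by the abstract theorem, which then yields a unique solution $(p,q) \in \mathbb{H}^{1} \times \mathbb{H}^{0}$ with $p \in C([0,T]; L^{2})$ a.s.

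To extract the estimate \eqref{leq:b1}, I would apply It\^o's formula to $\|p(t)\|_{0}^{2}$ (or, to handle the $-C\|p\|_{0}^{2}$ term cleanly, to $e^{\lambda t}\|p(t)\|_{0}^{2}$ with $\lambda$ large and use Gronwall), bound the free-term contribution by $2\langle F, p\rangle \leq \eta\|p\|_{1}^{2} + \eta^{-1}\|F\|_{-1}^{2}$ with $\eta$ small enough to absorb into $\kappa'\|p\|_{1}^{2}$, and derive the supremum bound from the Burkholder--Davis--Gundy inequality applied to the martingale $\int_{\cdot}^{T}\!\int_{\bR^{d}} p\,q^{k}\,dx\,dW^{k}_{s}$.

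The principal technical obstacle, beyond the coercivity calculation, is the \emph{rigorous justification of It\^o's formula} for $\|p(t)\|_{0}^{2}$, since a priori $p$ is merely an $\mathbb{H}^{1}$-valued process satisfying \eqref{eq:b1} in the duality sense. This requires a variational-type It\^o formula in the spirit of Krylov--Rozovskii, adapted to the backward setting; presumably it is established as part of the abstract BSEE machinery in Section~3, and simultaneously delivers the continuous $L^{2}$-modification of $p$.
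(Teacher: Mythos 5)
Your proposal is correct and follows essentially the same route as the paper: the paper's proof of Theorem \ref{thm:b} likewise works in the Gelfand triple $H^{1}\subset L^{2}\subset H^{-1}$, defines $\cL u = D_i(a^{ij}D_j u)+b^iD_iu-cu$ and $\cM^k v = D_i(\sigma^{ik}v)+\nu^k v$, verifies the coercivity condition $2\la u,\cL u\ra+\|\cM^*u\|_H^2\le -\tfrac{\kappa}{2}\|u\|_1^2+C(\kappa,K)\|u\|_0^2$ by Green's formula and Assumption \ref{ass:b2} (the same completing-the-square computation you sketch), and then invokes Proposition \ref{prop:c1}. The It\^o-formula issue you flag is indeed handled there by Lemma \ref{lem:c1}, the backward analogue of Krylov--Rozovskii.
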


\begin{rmk}
  Comparing to the requirement of the boundedness
  of $b^{i},c,\nu^{k}$ and their first derivatives in Zhou \cite{Tess96},
  we only need the the boundedness of
  $b^{i},c$ and $\nu^{k}$.
\end{rmk}
\medskip

To investigate the (strong) solution of equation \eqref{eq:a2}, we need, in
addition, the following

\begin{ass}\label{ass:b3}
  There exists a function $\gamma: [0,\infty)\rrow[0,\infty)$
  such that $\gamma$ is continuous and increasing, $\gamma(r)=0$ if and
  only if $r=0$, and for any $(\omega,t)\in\Omega\times[0,T]$ and any $x,y\in
  \bR^{d}$,
  \begin{equation}
    |a(\omega,t,x)-a(\omega,t,y)|\leq \gamma(|x-y|),~~~~
    |\sigma(\omega,t,x)-\sigma(\omega,t,y)|
    \leq \gamma(|x-y|).
  \end{equation}
\end{ass}
%


Then we have the following theorem, whose proof will be given in Section 4.

\begin{thm}\label{thm:b1}
  Let Assumptions \ref{ass:b1}, \ref{ass:b2} and \ref{ass:b3} be satisfied.
  Assume that the functions $b^{i},c$ and $\nu^{k}$ are bounded by $K$.
  Suppose $$F\in \bH^{0},~~~~\phi \in L^{2}(\Omega,\sF_{T},H^{1}).$$
  Then equation \eqref{eq:a2} has a unique strong
  solution $(p,q)$ in the
  space $\mathbb{H}^{2}\times\mathbb{H}^{1}$
  such that $p\in C([0,T],L^{2})\cap L^{\infty}([0,T],H^{1})~(a.s.),$
  and moreover,
  \begin{equation}\label{leq:b2}
    \intl p \intl_{2}^{2} +\intl q \intl_{1}^{2}
    +E\sup_{t\leq T}\|p(t,\cdot)\|_{1}^{2}
    \leq C\big{(} \intl F \intl_{0}^{2} + E \|
    \phi\|_{1}^{2}\big{)},
  \end{equation}
  where the constant $C$ depends only on $K,\kappa,T$ and the function
  $\gamma$.
\end{thm}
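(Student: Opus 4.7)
The plan is to use Theorem~\ref{thm:b} as a black box and combine it with an approximation argument controlled by Assumption~\ref{ass:b3}. Because $a$ and $\sigma$ are only $\gamma$-continuous in $x$, one cannot differentiate the equation directly, so first mollify in $x$: set $a_\epsilon=a\ast\zeta_\epsilon$, $\sigma_\epsilon=\sigma\ast\zeta_\epsilon$ (and, for convenience, also $b_\epsilon,c_\epsilon,\nu_\epsilon$), obtaining coefficients that are smooth in $x$, bounded by $K$, super-parabolic with the same $\kappa$, and inheriting the modulus $\gamma$ uniformly in $\epsilon$. With these smoothed coefficients, equation~\eqref{eq:a2} rewrites as the divergence-form equation
\[
dp_\epsilon=-\bigl\{D_i\bigl[a_\epsilon^{ij}D_jp_\epsilon+\sigma_\epsilon^{ik}q_\epsilon^k\bigr]+\hat b_\epsilon^iD_ip_\epsilon-c_\epsilon p_\epsilon+\hat\nu_\epsilon^kq_\epsilon^k+F\bigr\}\,dt+q_\epsilon^k\,dW^k,
\]
with $\hat b_\epsilon^i=b_\epsilon^i-D_ja_\epsilon^{ji}$ and $\hat\nu_\epsilon^k=\nu_\epsilon^k-D_i\sigma_\epsilon^{ik}$, both bounded (non-uniformly in $\epsilon$). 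Theorem~\ref{thm:b} produces a unique weak solution $(p_\epsilon,q_\epsilon)\in\bH^1\times\bH^0$; reapplying Theorem~\ref{thm:b} to the equation satisfied by $(D_lp_\epsilon,D_lq_\epsilon)$---which has the same divergence structure and a free term in $\bH^{-1}$---lifts the approximants to $(p_\epsilon,q_\epsilon)\in\bH^2\times\bH^1$.

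The crux is a \emph{uniform-in-$\epsilon$} version of~\eqref{leq:b2}. Apply It\^o's formula to $\|Dp_\epsilon(t)\|_0^2$ using the non-divergence form directly; integration by parts in $x$ turns the second-order contribution into $-2\sum_l\int a_\epsilon^{ij}D_{il}p_\epsilon D_{jl}p_\epsilon\,dx-2\sum_l\int\sigma_\epsilon^{ik}D_{il}p_\epsilon D_lq_\epsilon^k\,dx$, which combined with the It\^o bracket $-\|Dq_\epsilon\|_0^2$ and the super-parabolic inequality $2a_\epsilon\ge\kappa I+\sigma_\epsilon\sigma_\epsilon^\ast$ gives the coercive bound $-\kappa\|D^2p_\epsilon\|_0^2-\|Dq_\epsilon+\sigma_\epsilon^\ast D^2p_\epsilon\|_0^2$. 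A triangle inequality, combined with the pointwise estimate $|\sigma_\epsilon^\ast D^2p_\epsilon|^2\le(K-\kappa)|D^2p_\epsilon|^2$ drawn from Assumption~\ref{ass:b2}, then recovers strictly positive contributions of both $\|D^2p_\epsilon\|_0^2$ and $\|Dq_\epsilon\|_0^2$. The troublesome terms are the commutators produced when $D_l$ falls on $a_\epsilon$ or $\sigma_\epsilon$: their coefficients carry $\|Da_\epsilon\|_\infty,\|D\sigma_\epsilon\|_\infty\to\infty$. Handle them by freezing: fix a scale $\rho>0$ and a partition of unity $\{\eta_m^2\}$ subordinate to a locally finite cover of $\bR^d$ by balls $B_\rho(x_m)$, and on each piece replace $a_\epsilon(\cdot)$ by $a_\epsilon(x_m)$. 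The constant-coefficient part produces no bad commutator; the perturbation has sup-norm at most $\gamma(\rho)$ on the support of $\eta_m$ by Assumption~\ref{ass:b3}, and contributes a term bounded by $C\gamma(\rho)\|D^2p_\epsilon\|_0^2$ which is absorbable into the coercive term once $\rho$ is chosen with $C\gamma(\rho)<\kappa/2$. The derivatives of the cut-offs $\eta_m$ produce only $\bH^1$-level remainders already controlled by the estimate from Theorem~\ref{thm:b}. A backward Gronwall step together with a Burkholder-Davis-Gundy inequality then delivers~\eqref{leq:b2} with a constant depending only on $K,\kappa,T,\gamma$.

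With the uniform bound in hand, extract a weakly convergent subsequence in $\bH^2\times\bH^1$. Assumption~\ref{ass:b3} gives locally uniform convergence $a_\epsilon\to a$, $\sigma_\epsilon\to\sigma$, and combined with the weak convergence of $(p_\epsilon,q_\epsilon)$ this suffices to pass to the limit in every term of~\eqref{eq:a2} and identify $(p,q)$ as a strong solution. Uniqueness is a linear consequence of It\^o's formula applied to the $L^2$-norm of the difference of two solutions together with Assumption~\ref{ass:b2}. The continuity $p\in C([0,T],L^2)$ follows from the $L^2$-level It\^o formula (as in Theorem~\ref{thm:b}), while $p\in L^\infty([0,T],H^1)$ is encoded in the $E\sup_t\|p(t)\|_1^2$ term of the estimate. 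The main obstacle throughout is the uniform $\bH^2$-bound: naively one loses control the moment $D_l$ hits a mollified coefficient, and the freezing device, which substitutes the modulus $\gamma$ for an unavailable Lipschitz bound on $a$ and $\sigma$, is precisely what lets the estimate survive the limit $\epsilon\to 0$.
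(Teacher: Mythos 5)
Your overall strategy (mollify the leading coefficients, prove a uniform $\bH^2\times\bH^1$ bound by localization and coefficient freezing at scale $\rho$ with $\gamma(\rho)$ small, then pass to the weak limit) is a legitimate alternative to the paper's route, which instead chains together constant-coefficient solvability in the Hilbert-space framework with $V=H^2$, $H=H^1$ (Proposition \ref{lem:d2}), a perturbation lemma proved by contraction in $\bH^2\times\bH^1$ (Lemma \ref{lem:d3}), a localized a priori estimate (Lemma \ref{lem:d4}), and the method of continuity; the freezing-on-small-balls device is common to both. However, your uniqueness argument does not work as stated. You claim uniqueness is ``a linear consequence of It\^o's formula applied to the $L^2$-norm of the difference together with Assumption \ref{ass:b2},'' but with $a$ merely $\gamma$-continuous in $x$ you cannot integrate by parts in $\int u\,a^{ij}D_{ij}u\,dx$ to produce $-\int a^{ij}D_iu\,D_ju\,dx$ (the commutator carries $Da$, which need not exist), and without that step the $L^2$-level energy identity yields only a bound by $K\|u\|_0\|D^2u\|_0$, which no Gronwall argument can close. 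Uniqueness has to come from an a priori estimate valid for an \emph{arbitrary} strong solution of the original, non-mollified equation --- this is precisely the role of Lemma \ref{lem:d4} in the paper --- whereas your uniform estimate is established only for the solutions $(p_\eps,q_\eps)$ of the mollified equations. The gap is fillable (e.g., view a given strong solution $(p,q)$ of \eqref{eq:a2} as a strong solution of the $\eps$-mollified equation with free term $F+(a-a_\eps)^{ij}D_{ij}p+(\sigma-\sigma_\eps)^{ik}D_iq^k$ and let $\eps\to0$ using $\|a-a_\eps\|_\infty\le\gamma(\eps)$), but some such argument must be supplied.

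Two further steps are thinner than the difficulty they conceal. First, ``reapplying Theorem \ref{thm:b} to the equation satisfied by $(D_lp_\eps,D_lq_\eps)$'' presupposes $D_lp_\eps\in\bH^1$, which is what is to be proved; one needs difference quotients (uniformly bounded in $\bH^1$ via \eqref{leq:b1} since $\|Da_\eps\|_\infty<\infty$ for fixed $\eps$) and a weak-limit identification. Second, the assertion that after freezing the perturbation ``contributes a term bounded by $C\gamma(\rho)\|D^2p_\eps\|_0^2$'' is correct only if the pairing $\la D_lw,\,D_l([a_\eps-a_\eps(x_m)]^{ij}D_{ij}w)\ra$ is evaluated by moving the outer derivative onto the test factor, i.e.\ as $-(D_{ll}w,[a_\eps-a_\eps(x_m)]^{ij}D_{ij}w)$; if one instead distributes $D_l$ over the product, the commutator $(D_la_\eps)D_{ij}w$ reappears with a constant that blows up as $\eps\to0$ and the freezing buys nothing. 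This is the crux of the whole uniform estimate and must be made explicit. The paper avoids the issue altogether by never running an energy computation against the variable leading coefficients: the deviation $(a-a_0)^{ij}D_{ij}u$ is placed in the free term of a frozen equation and absorbed by a fixed-point argument, which is why its a priori estimate needs no regularity of the solution beyond $\bH^2\times\bH^1$.
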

%
%
With the aid of Theorem \ref{thm:b1}, we can obtain the following

\begin{thm}\label{thm:b2}
  Let Assumptions \ref{ass:b1} and \ref{ass:b2} be satisfied. Let $n$
  be a positive integer. Assume that for any multi-index $\alpha$ s.t.
  $|\alpha|\leq n$,
  \begin{eqnarray}\label{con:b1}
    \begin{split}
    &\mathop{{\rm ess}\sup}_{\Omega\times[0,T]\times\bR^d}\big(
    |D^{\alpha}a|+|D^{\alpha}b|+|D^{\alpha}c|
    +|D^{\alpha}\sigma|+|D^{\alpha}\nu|\big)\leq K,\\
    &~~~~~~~~~~~F\in \bH^{n},~~~~~~
    \phi \in L^{2}(\Omega,\sF_{T},H^{n+1}).
    \end{split}
  \end{eqnarray}
  Then equation \eqref{eq:a2} has a unique strong
  solution $(p,q)$ such that
  $$ p\in \bH^{n+2},~~~q\in \bH^{n+1},~~~p\in C([0,T],H^{n})
  \cap L^{\infty}([0,T],H^{n+1})~(a.s.),$$
  with the estimate
  \begin{eqnarray}
    \begin{split}\label{leq:b3}
    \intl p \intl_{n+2}^{2} +\intl q \intl_{n+1}^{2}
    +E\sup_{t\leq T}\|p(t,\cdot)\|_{n+1}^{2}
    \leq C\big{(} \intl F \intl_{n}^{2} + E \|
    \phi\|_{n+1}^{2}\big{)},
    \end{split}
  \end{eqnarray}
  where the constant $C$ depends only on $K,\kappa$ and $T$.
\end{thm}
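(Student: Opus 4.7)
I would argue by induction on $n$, treating Theorem \ref{thm:b1} as the ``$n=0$'' base case (it applies under the hypotheses of Theorem \ref{thm:b2} since bounded $Da,D\sigma$ force Assumption \ref{ass:b3} with $\gamma(r)=Kr$) and lifting one spatial derivative at each step. Suppose the conclusion of Theorem \ref{thm:b2} is known for $n-1\ge 0$ (interpreting the $n-1=0$ case as Theorem \ref{thm:b1}). Under the hypotheses at level $n$, the weaker hypotheses at level $n-1$ are satisfied, so the induction hypothesis already furnishes a unique strong solution $(p,q)\in\bH^{n+1}\times\bH^n$ with $p\in C([0,T],H^{n-1})\cap L^\infty([0,T],H^n)$ a.s.\ and the corresponding norm estimate.

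Formally differentiating \eqref{eq:a2} in $x^i$, the pair $(D_ip,D_iq)$ satisfies a BSPDE of the same form with the same coefficients $a,b,c,\sigma,\nu$ but with modified free term
\begin{equation*}
\tilde F_i = (D_ia^{jk})D_{jk}p + (D_ib^j)D_jp - (D_ic)p + (D_i\sigma^{jl})D_jq^l + (D_i\nu^l)q^l + D_iF
\end{equation*}
and terminal $D_i\phi$. Using the bounded-derivative hypotheses \eqref{con:b1} and Moser-type multiplication estimates, each term of $\tilde F_i$ lies in $\bH^{n-1}$ with norm controlled by $\intl F\intl_n$ and by the $\bH^{n+1}$, $\bH^n$ norms of $p,q$; likewise $D_i\phi\in L^2(\Omega,\sF_T,H^n)$. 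Applying the induction hypothesis at level $n-1$ to this new BSPDE (whose coefficients still satisfy Assumptions \ref{ass:b1}--\ref{ass:b2} with the same $K,\kappa$ and possess bounded derivatives through order $n-1$) yields $D_ip\in\bH^{n+1}$, $D_iq\in\bH^n$; ranging over $i$ then gives $p\in\bH^{n+2},q\in\bH^{n+1}$ together with $p\in C([0,T],H^n)\cap L^\infty([0,T],H^{n+1})$ a.s., and the estimate \eqref{leq:b3} by concatenating the inductive estimates.

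To justify the formal differentiation I would use the finite-difference quotient $\Delta_h^iu(x)=h^{-1}(u(x+he_i)-u(x))$. Subtracting \eqref{eq:a2} at $x+he_i$ from the same equation at $x$ and dividing by $h$, the pair $(\Delta_h^ip,\Delta_h^iq)$ solves a BSPDE whose shifted coefficients $a^{jk}(t,x+he_i),\ldots$ satisfy the same structural hypotheses with constants independent of $h$, and whose free term and terminal are bounded uniformly in $h$ in $\bH^{n-1}$ and $L^2(\Omega,\sF_T,H^n)$, respectively (using $\sup_h\|\Delta_h^iD^\alpha a\|_\infty\le\|D_iD^\alpha a\|_\infty\le K$ for $|\alpha|+1\le n$). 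The inductive estimate applied to this equation gives $\bH^{n+1}$ and $\bH^n$ bounds on $\Delta_h^ip,\Delta_h^iq$ uniform in $h$; weak compactness then extracts a subsequential limit, which is identified with $(D_ip,D_iq)$ in the sense of distributions, and lower semicontinuity of norms yields the required estimates.

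The main technical obstacle is the careful setup and bookkeeping of the difference-quotient BSPDE: one must verify that the shifted coefficients retain Assumption \ref{ass:b2} with constants $K,\kappa$ independent of $h$, and that the new free term, involving shifted coefficients applied to $D^2p$ and $Dq$, is uniformly bounded in $h$ in the appropriate Sobolev space. Once these uniform estimates are in hand, the weak-limit identification $\Delta_h^ip\rightharpoonup D_ip$, $\Delta_h^iq\rightharpoonup D_iq$ and the iteration of the argument $n$ times are routine.
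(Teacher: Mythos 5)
Your proposal is correct and follows essentially the same route as the paper: an induction in which one differentiates the equation, absorbs the commutator terms $(D^{\beta}a^{ij})(D^{\gamma}D_{ij}p)$, $(D^{\beta}\sigma^{ik})(D^{\gamma}D_iq^k)$, etc.\ into a new free term controlled by the inductive estimate, and applies the lower-order result to the differentiated equation (the paper differentiates by a full multi-index $\alpha$ with $|\alpha|=m$ and invokes Theorem \ref{thm:b1} directly at each stage, rather than peeling off one derivative at a time and citing the level-$(n-1)$ statement, but this is cosmetic). The one substantive difference is that your difference-quotient justification is dispensable: since the induction hypothesis already supplies $p\in\bH^{m+1}$ and $q\in\bH^{m}$, the derivatives $D^{\alpha}p$, $D^{\alpha}q$ with $|\alpha|=m$ exist in $\bH^{1}\times\bH^{0}$ \emph{before} any new estimate is proved, so the weak formulation of the differentiated equation follows directly by integrating by parts against $D^{\alpha}\eta$ and the identification with the strong solution produced by Theorem \ref{thm:b1} is handled by uniqueness of the weak solution --- which is exactly how the paper proceeds.
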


\begin{proof}
  The first inequality of condition \eqref{con:d1} implies Assumption
  \ref{ass:b3}. In view of Theorem \ref{thm:b1},
  equation \eqref{eq:a2} has a unique strong
  solution $(p,q)$ in the
  space $\mathbb{H}^{2}\times\mathbb{H}^{1}$
  such that $p\in C([0,T],L^{2})\cap L^{\infty}([0,T],H^{1})~(a.s.),$ and
  estimate \eqref{leq:b2} holds true.

  Now we apply induction to prove this theorem.

  Assume that the assertion of Theorem \ref{thm:b2} holds true for $n=m-1$
  ($m\geq 1$), that is
  \begin{equation*}
    p\in \bH^{m+1},~~~q\in\bH^{m},~~~p\in C([0,T],H^{m-1})~(a.s.),
  \end{equation*}
  and inequality \eqref{leq:b2} holds for $n=m-1$.

  Note that equation \eqref{eq:a2} can be rewritten into divergence form
  like \eqref{eq:a1} since $Da^{ij}$ and $D\sigma^{ik}$ are bounded. Therefore,
  by the integration of parts, it is not hard to show that for any multi-index
  $\alpha$ s.t. $|\alpha|=m$, the function pair
  $(D^{\alpha}p,D^{\alpha}q)\in \bH^1\times\bH^0$ satisfies the
  following equation (in the sense of Definition \ref{defn:b1} (i))
  \begin{equation}\label{eq:b5}
    \left\{\begin{array}{l}
      d u = -\big( a^{ij}D_{ij}u + \sigma^{ik}D_{i}v^{k} + \tilde{F}\big)dt
      + v^k dW_{t}^k,\\
      u(T,x)=D^{\alpha}\phi(x),~~~~ x\in \bR^d,
      \end{array}\right.
  \end{equation}
  with the unknown functions $u$ and $v$. Here ($|\alpha|=m$)
  \begin{eqnarray*}
    \begin{split}
      \tilde{F}~=~& D^{\alpha}F +
      \sum_{|\beta|+|\gamma|=|\alpha|,|\beta|\geq 1}\big[
      \big( D^{\beta}a^{ij} \big)
      \big( D^{\gamma}p_{x^{i}x^{j}} \big) +
      \big( D^{\beta}\sigma^{i} \big)
      \big( D^{\gamma}q_{x^{i}} \big) \big]\\
      &+ \sum_{|\beta|+|\gamma|=|\alpha|}\big[
      \big( D^{\beta}b^{i} \big)
      \big( D^{\gamma}p_{x^{i}} \big)
      -\big( D^{\beta}c \big)
      \big( D^{\gamma}p \big)
      +\big( D^{\beta}\nu \big)
      \big( D^{\gamma}q \big) \big].
    \end{split}
  \end{eqnarray*}
  From our assumption for $n=m-1$ and condition \eqref{con:b1}, we see that
  $\tilde{F}\in \bH^{0}.$ Moreover, from estimate \eqref{leq:b2}
  for $n=m-1$, we obtain that ($|\alpha|=m$)
  \begin{eqnarray*}\begin{split}
    \intl \tilde{F}\intl_{0}^{2}~
    \leq ~& C(\kappa,K,T)~ \big( \intl D^{\alpha}F \intl_{0}^{2}
    +\intl  p \intl_{m+1}^{2}
    + \intl  q \intl_{m}^{2} \big)\\
    \leq ~& C(\kappa,K,T)~\big( \intl F \intl_{m}^{2}+ \|\phi\|_{m}^{2} \big).
  \end{split}\end{eqnarray*}
  Then applying Theorem \ref{thm:b1} to equation \eqref{eq:b5}, we
  obtain that $(D^{\alpha}p,D^{\alpha}q)\in \bH^2\times\bH^1$, and
  $D^{\alpha}p\in C([0,T],L^2)\cap L^{\infty}([0,T],H^{1})$
  (a.s.), and moveover (recall $|\alpha|=m$)
  \begin{equation*}
    \intl D^{\alpha}p \intl_{2}^{2} +\intl D^{\alpha}q \intl_{1}^{2}
    +E\sup_{t\leq T}\|D^{\alpha}p(t,\cdot)\|_{1}^{2}
    \leq C\big{(} \intl F \intl_{m}^{2} + E \|
    \phi\|_{m+1}^{2}\big{)}.
  \end{equation*}
  The proof is complete.
\end{proof}

%

\begin{rmk}
  Theorems \ref{thm:b1} and \ref{thm:b2} improve the results obtained
  by Zhou \cite{Zhou92} in two aspects. The first is that we reach
  $p\in \bH^{n+2},q\in \bH^{n+1}$ ($n\geq 0$) only requiring the
  boundedness
  of the $n$th-order derivatives of the coefficients. This requirement
  is much weaker than that in \cite{Zhou92}.
  The second is that the theorems provide the estimates for the terms
  $E\sup_{t\leq T}\|p(t,\cdot)\|_{n+1}^{2}$ rather than the terms
  $\sup_{t\leq T}E\|p(t,\cdot)\|_{n+1}^{2}$ as in \cite{Zhou92}.
\end{rmk}
%

\begin{rmk}
  In the case of $n-d/2>2$, the function pair $(p,q)$ satisfies equation
  \eqref{eq:a2} for all $(t,x)\in[0,T]\times\bR^{d}$ and $\omega\in \Omega'$ s.t.
  $\mathbb{P}(\Omega')=1$,
  which is a classical solution of equation \eqref{eq:c1}
  (see e.g. \cite{MaYo99}).
\end{rmk}

\begin{rmk}
  In this paper, all constants denoted by
  $C$ are independent of $d_1$, which allows us to extend our results
  (Theorems \ref{thm:b}, \ref{thm:b1} and \ref{thm:b2}) to
  the more general case of equation \eqref{eq:c1} which is driven by a
  Hilbert-space valued Wiener process.
\end{rmk}

\section{Backward stochastic evolution equations in Hilbert spaces}

In this section, we consider backward stochastic evolution equations in
Hilbert spaces. The basic form of the main result (Proposition \ref{prop:c1})
in this section is first obtained by Hu-Peng \cite{HuPe91}. However, they did
not give any rigorous proof. In order to be self-contained, we provide here a
proof of this result with details, and establish a estimate which did not
appear in \cite{HuPe91}. \medskip

Let $V$ and $H$ be two separable (real) Hilbert spaces such that $V$ is
densely embedded in $H$. We identify $H$ with its dual space, and denote by
$V^{*}$ the dual of $V$. Then we have $V \subset H \subset V^{*}$. Denote by
$\|\cdot\|_{V},\|\cdot\|_{H}$ and $\|\cdot\|_{V^*}$ the norms of $V,H$ and
$V^*$ respectively, by $(\cdot,\cdot)$ the inner product in $H$, and by
$\la\cdot,\cdot\ra$ the duality product between $V$ and $V^{*}$.

Consider three processes $v,m$ and $v^{*}$ defined on $\Omega\times[0,T]$
with values in $V,H$ and $V^{*}$, respectively. Let $v(\omega,t)$ be
measurable with respect to $(\omega,t)$ and be $\mathscr{F}_{t}$-measurable
with respect to $\omega$ for a.e. $t$; for any $\eta\in V$ the quantity
$\la\eta, v^{*}(\omega,t)\ra$ is $\mathscr{F}_{t}$-measurable in $\omega$ for
a.e. $t$ and is measurable with respect to $(\omega,t)$. Assume that
$m(\omega,t)$ is strongly continuous in $t$ and is
$\mathscr{F}_{t}$-measurable with respect to $\omega$ for any $t$, and is a
local martingale. Let $\la m \ra$ be the increasing process for
$\|m\|_{H}^{2}$ in the Doob-Meyer Decomposition (see e.g. \cite[p.
1240]{KrRo81}).

Proceeding identically to the proof of Theorem 3.2 in Krylov-Rozovskii
\cite{KrRo81}, we have the following result concerning It\^o's formula, which
is the backward version of \cite[Thm. 3.2]{KrRo81}.

\begin{lem}\label{lem:c1}
  Let $\vf\in L^{2}(\Omega,\mathscr{F}_{T},H)$. Suppose that for every
  $\eta \in V$ and almost every $(\omega,t)\in\Omega\times[0,T]$, it holds
  that
  \begin{equation*}
    ( \eta,v(t)) = ( \eta,\vf )
    + \int_{t}^{T} \la \eta,v^{*}(s) \ra ds
    + ( \eta, m(T)-m(t) ).
  \end{equation*}
  Then there exist a set $\Omega'\subset \Omega$ s.t. $P(\Omega')=1$
  and a function $h(t)$ with values in $H$ such that

    \emph{(a)} $h(t)$ is $\mathscr{F}_{t}$-measurable for any $t\in [0,T]$ and
    strongly continuous with respect to
    $t$ for any $\omega$, and $h(t)=v(t)$ (in the space $H$) for a.s.
    $(\omega,t)\in \Omega\times [0,T]$, and $h(T)=\vf$ for any
    $\omega\in\Omega'$;

    \emph{(b)} for any $\omega\in \Omega'$ and any $t\in [0,T]$,
    \begin{equation*}
      \|h(t)\|_{H}^{2} = \|\vf\|_{H}^{2} + 2\int_{t}^{T}\la v(s),v^{*}(s)\ra
      ds + 2 \int_{t}^{T}( h(s), d m(s)) - \la m \ra_{T} + \la m
      \ra_{t}.
    \end{equation*}
\end{lem}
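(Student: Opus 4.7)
The strategy is to adapt the Galerkin projection argument from the proof of Theorem 3.2 in Krylov--Rozovskii \cite{KrRo81}, with the only substantive change being that all time integrals now run from $t$ to $T$ and all initial data become terminal data. Once we test the identity against a vector of an $H$-orthonormal basis drawn from $V$, each coordinate of $v$ becomes a continuous real semimartingale driven by a finite-variation piece and a real local martingale, so the classical one-dimensional It\^o formula applies componentwise; the lemma is obtained by summing and passing to the limit.

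Concretely, first fix an orthonormal complete system $\{e_i\}_{i\geq 1}\subset V$ of $H$ (possible since $V$ is dense in $H$) and set $\eta=e_i$ in the given identity to get, for a.e.\ $(\omega,t)$,
\[
  (e_i, v(t)) = (e_i,\vf) + \int_t^T \la e_i, v^{*}(s)\ra\,ds + (e_i, m(T)) - (e_i, m(t)).
\]
The right-hand side is continuous in $t$ on a common set $\Omega'$ of full probability, so it defines a continuous modification $f_i(t):=(e_i, v(t))$. Writing $M^{i}(t):=(e_i,m(t))$, a real continuous local martingale, and $g_i(t):=\la e_i, v^{*}(t)\ra$, each $f_i$ satisfies $df_i=-g_i\,dt+dM^i$, so the classical It\^o formula for $f_i^{2}$ on $[t,T]$ yields
\[
  f_i(t)^2 = f_i(T)^2 + 2\int_t^T f_i(s)g_i(s)\,ds - 2\int_t^T f_i(s)\,dM^{i}(s) - \la M^i\ra_T + \la M^i\ra_t.
\]
Setting $h_n(t)=\sum_{i=1}^{n}f_i(t)e_i$, which is $H$-valued and strongly continuous, and summing for $i\leq n$ gives a finite-dimensional version of the claimed It\^o formula with $\pi_n\vf$, $\pi_n v^{*}$, and $\pi_n m$ in place of $\vf,v^{*},m$, where $\pi_n$ is the $H$-orthogonal projection onto $\mathrm{span}\{e_1,\dots,e_n\}$.

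Finally I would pass to the limit $n\to\infty$: $\pi_n\vf\to\vf$ in $H$, $\pi_n m(t)\to m(t)$ uniformly in $t$ after stopping (by Doob's/BDG inequality applied to $m-\pi_n m$), $\la \pi_n m\ra\to \la m\ra$, and the scalar sum $\sum_{i\leq n}f_i(s)g_i(s)=\la \pi_n v(s), v^{*}(s)\ra$ converges to $\la v(s), v^{*}(s)\ra$ once $\pi_n v\to v$ in $V$. Standard BDG and Gronwall-type estimates then force $\{h_n\}$ to be Cauchy in $C([0,T],H)$ almost surely along a subsequence, so its limit $h$ is the desired strongly continuous modification of $v$ and the claimed identity is obtained by passing to the limit in the summed formula. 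The principal obstacle is this last $V$-convergence $\pi_n v\to v$, which fails for arbitrary $H$-orthonormal bases; following \cite{KrRo81}, I would arrange it by taking $\{e_i\}$ to be eigenvectors of a self-adjoint isomorphism $\Lambda:V\to V^{*}$, so that $\pi_n$ is simultaneously orthogonal in $V$, $H$, and $V^{*}$, after which the limit passages for both the duality integral and the stochastic integral term go through routinely.
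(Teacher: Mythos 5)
The paper offers no proof of this lemma at all: it simply declares it to be the backward-in-time analogue of \cite[Thm.~3.2]{KrRo81} and says the proof ``proceeds identically.'' That proof is a \emph{partition} (time-discretization) argument, not a Galerkin one: along refining partitions $\{t_j\}$ of $[t,T]$ one writes $\|v(t_{j+1})\|_H^2-\|v(t_j)\|_H^2=2\la v(t_{j+1})-v(t_j),\,v(t_{j+1})\ra-\|v(t_{j+1})-v(t_j)\|_H^2$, substitutes the equation for the increment so that every duality product pairs a genuine element of $V$ with a genuine element of $V^*$, proves $\mathrm{ess\,sup}_t\|v(t)\|_H<\infty$ a.s.\ by a preliminary localization lemma, and deduces strong continuity from weak continuity in $H$ together with convergence of the norms. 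No projection is ever required to be uniformly bounded on $V$ or $V^*$.

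Your route is genuinely different, and it breaks exactly at the step you yourself flag as the principal obstacle. To get $\la\pi_n v,v^*\ra\rrow\la v,v^*\ra$ with domination, and to close the Cauchy estimate for $h_n$ in $C([0,T],H)$, you need $\pi_n$ to be simultaneously (uniformly bounded and strongly convergent to the identity) on $V$, $H$ and $V^*$, and you propose to achieve this by taking $\{e_i\}$ to be eigenvectors of a self-adjoint isomorphism $\Lambda\colon V\rrow V^*$. A complete orthonormal system of such eigenvectors exists only when the bounded positive self-adjoint operator $\Lambda^{-1}\colon H\rrow H$ has pure point spectrum -- which is guaranteed when the embedding $V\hookrightarrow H$ is compact, but fails in the generality of the lemma and, more to the point, fails in every instance where this paper actually uses it: there $V=H^{1}(\bR^d)$ or $H^{2}(\bR^d)$ and $H=L^2(\bR^d)$ or $H^1(\bR^d)$ on the \emph{whole space}, the embedding is not compact, and the canonical choice $\Lambda=I-\Delta$ has purely continuous spectrum and hence no eigenvectors whatsoever. (A complete $H$-orthonormal system that is also $V$-orthogonal is, for regular elements, necessarily a system of eigenvectors of $\Lambda$, so one cannot evade the problem by dropping the word ``eigenvector.'') Without such a basis the term $\int_t^T\la(\pi_n-\pi_{n'})v(s),v^*(s)\ra\,ds$ cannot be controlled and your limit passage does not go through. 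The scalar reduction and the summation over $i\le n$ are fine as far as they go; to complete the proof you should abandon the spatial projection scheme and carry out the time-partition argument of \cite{KrRo81} in the backward setting, which is insensitive to the geometry of the Gelfand triple.
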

\medskip

Denote $H^{\otimes d_1}=\{v=(v^1,v^2,\dots,v^{d_1}): v^{k}\in
H,k=1,2,\dots,d_1\}$. The norm in $H^{\otimes d_1}$ is defined by
$\|v\|_{H^{\otimes d_1}}=(\sum_{k}\|v^{k}\|^2_{H})^{1/2}$.

Assume that linear operators
$$ \cL(\omega,t):\ V\rightarrow V^*,
\quad \cM^k(\omega,t):\ H \rightarrow V^*,$$ and functions
$\vf(\omega),f(\omega,t)$ taking values in $H$ and $V^*$, respectively, are
given for $t\in [0,T],\omega\in\Omega$. Denote
$\cM=(\cM^1,\cM^2,\dots,\cM^{d_1})$, then we define a linear operator
$\cM:~H^{\otimes d_1}\rrow V^*$ as follows:
$$\cM v = \sum_{k}\cM^{k}v^{k},~~~\forall~v\in H^{\otimes d_1}.$$

Consider the linear backward stochastic evolution equation (we use the
summation convention)
\begin{equation}\label{eq:c1}
  u(t) = \vf + \int_{t}^{T}[\cL u(s) + \cM v (s) + f(s)]ds
  - \int_{t}^{T} v^{k}(s) dW^{k}_{s}.
\end{equation}

\begin{defn}\label{defn:c1}
  An $\sF_{t}$-adapted process $(u,v)$ valued in $V \times H^{\otimes d_1}$
  is called a
  solution of equation \eqref{eq:b1}, if $u \in L^{2}(\Omega\times (0,T),\sP,V)$ and
  $v \in L^{2}(\Omega\times (0,T),\sP,H^{\otimes d_1})$,
  such that for every $\eta \in V$ and
  a.e. $(\omega,t)\in \Omega\times[0,T]$, it holds that
  \begin{equation*}
    ( \eta,u(t) ) = ( \eta,\vf ) +
    \int_{t}^{T}\la\eta, \cL u(s) + \cM v(s) + f(s) \ra ds
    - \int_{t}^{T} ( \eta,v^{k}(s)) dW^{k}_{s}.
  \end{equation*}
\end{defn}

\begin{rmk}
  From Lemma \ref{lem:c1}, we know that a solution of equation \eqref{eq:c1}, in
  the sense of Definition \ref{defn:c1}, always has a continuous version in
  $H$.
\end{rmk}

\begin{rmk}
  When $\cL$ is the infinitesimal generator of a $C_{0}$-semigroup
  (so independent of $(\omega,t)$), another notion of
  the solution of equation \eqref{eq:c1}, i.e. so-called the mild solution,
  is also studied
  in many literatures, see e.g. \cite{HuPe91,MaMc07,Tess96}.
\end{rmk}

Now we study the existence and uniqueness of the solution of equation
\eqref{eq:c1}. We need the following

\begin{ass}\label{ass:c1}
  There exist two constants $\lambda, \Lambda > 0$ such that
  for any $(\omega, t)\in
   \Omega\times[0,T]$,
  \begin{eqnarray}\label{con:c1}\begin{split}
    2\la x,\cL x\ra + \|\cM^{*}x\|_{H^{\otimes d_1}}^{2}
    &\leq -\lambda\|x\|_{V}^{2}
    + \Lambda\|x\|_{H}^{2},\\
    \|\cL x\|_{V^*}&\leq \Lambda\|x\|_{V},~~~\forall x\in V,
    \end{split}
  \end{eqnarray}
  where $\cM^{*}:V \rightarrow H^{\otimes d_1}$ is
  the adjoint operator of $\cM$. The
  first inequality is called the coercivity condition
  (see e.g. \cite{Rozo90}).
\end{ass}

The main result of this section is the following

\begin{prop}\label{prop:c1}
  Let Assumption \ref{ass:c1} be satisfied. Suppose that
  \begin{equation}
    f\in L^{2}(\Omega\times (0,T),\sP,V^*),\quad \vf\in L^{2}(\Omega,\sF_{T},H).
  \end{equation}
  Then equation \eqref{eq:c1} has a unique solution $(u,v)$ in the space
  $L^{2}(\Omega\times (0,T),\sP,V\times H^{\otimes d_1})$
  such that $u\in C([0,T],H)$ (a.s.),
  and moreover,
  \begin{equation}\label{leq:c1}
    E\sup_{t\leq T}\|u(t)\|_{H}^{2}+E\int_{0}^{T}\big(\|u(t)\|_{V}^{2}
    +\|v(t)\|_{H^{\otimes d_1}}^{2} \big)dt
    \leq C \bigg(E\int_{0}^{T}\|f(t)\|_{V^*}^{2}dt + E\|\vf\|_{H}^{2}\bigg),
  \end{equation}
  where the constant $C=C(\lambda,\Lambda,T)$.
\end{prop}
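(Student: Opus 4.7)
The plan is to obtain uniqueness together with the a priori estimate \eqref{leq:c1} by an energy argument based on Lemma \ref{lem:c1}, and then to construct a solution via Galerkin approximation.

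For the a priori estimate I would apply Lemma \ref{lem:c1} to the process $u$ (with drift $-\mathcal{L}u-\mathcal{M}v-f$ and martingale part $\int_0^\cdot v^k\,dW^k$), which produces
\begin{equation*}
  \|u(t)\|_H^2 + \int_t^T \|v(s)\|_{H^{\otimes d_1}}^2\,ds
  = \|\varphi\|_H^2 + 2\int_t^T \langle u, \mathcal{L}u+\mathcal{M}v+f\rangle\,ds
  - 2\int_t^T (u,v^k)_H\,dW^k.
\end{equation*}
Rewriting $\langle u,\mathcal{M}v\rangle = (\mathcal{M}^*u,v)_{H^{\otimes d_1}}$ and using the identity $2(\mathcal{M}^*u,v) = \|v\|^2 + \|\mathcal{M}^*u\|^2 - \|v-\mathcal{M}^*u\|^2$ together with the coercivity bound $2\langle u,\mathcal{L}u\rangle + \|\mathcal{M}^*u\|^2 \le -\lambda\|u\|_V^2 + \Lambda\|u\|_H^2$, the $\|v\|^2$ terms cancel on both sides and $\|v-\mathcal{M}^*u\|^2$ appears with a favorable sign on the left. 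Absorbing $2\langle u,f\rangle$ by Young's inequality and applying a backward Gronwall lemma to the expectation yields bounds on $E\|u(t)\|_H^2$ and $E\int_0^T \|u\|_V^2\,ds$. The boundedness of $\mathcal{M}^*$, which follows from the coercivity pair via $\|\mathcal{M}^*u\|^2 \le (2\Lambda-\lambda)\|u\|_V^2 + \Lambda\|u\|_H^2$, then converts the control of $\|v-\mathcal{M}^*u\|^2$ into control of $\|v\|^2_{H^{\otimes d_1}}$; applying Burkholder--Davis--Gundy to the stochastic integral produces the $E\sup_t\|u(t)\|_H^2$ bound. Applying the same estimate to the difference of two solutions yields uniqueness.

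For existence I would use a Galerkin approximation. Choose an orthonormal basis $\{e_k\}$ of $H$ contained in $V$, set $V_n = \mathrm{span}\{e_1,\dots,e_n\}$, and let $\Pi_n$ denote the orthogonal projection in $H$ onto $V_n$, which extends naturally to a bounded map $V^*\to V_n$. The projected equation
\begin{equation*}
  u_n(t) = \Pi_n\varphi + \int_t^T \Pi_n\bigl[\mathcal{L}u_n+\mathcal{M}v_n+f\bigr]\,ds - \int_t^T v_n^k\,dW^k,
\end{equation*}
with $(u_n,v_n)$ valued in $V_n\times V_n^{\otimes d_1}$, is a finite-dimensional linear BSDE with Lipschitz coefficient in $v_n$, hence admits a unique adapted solution by the classical Pardoux--Peng result. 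The Itô computation above yields the same bounds uniformly in $n$, so I can extract weakly convergent subsequences $u_n\rightharpoonup u$ in $L^2(\Omega\times(0,T),\mathscr{P},V)$ and $v_n\rightharpoonup v$ in $L^2(\Omega\times(0,T),\mathscr{P},H^{\otimes d_1})$.

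The main obstacle, as I expect, is passing to the limit in the weak formulation. For any fixed $m$ and any test function $\eta\in V_m$, testing the $u_n$-equation ($n\ge m$) against $\eta$ produces an identity linear in $(u_n,v_n)$. Since $\mathcal{L}: V\to V^*$ and $\mathcal{M}: H^{\otimes d_1}\to V^*$ are bounded linear operators (the second boundedness again coming from the coercivity), the Bochner integrals and dual-pairing terms pass to the weak limit; the It\^o isometry makes $w\mapsto \int_t^T(\eta,w^k)\,dW^k$ a continuous linear map $L^2(\Omega\times(0,T);H^{\otimes d_1})\to L^2(\Omega)$, hence weakly continuous, so the stochastic integral term also converges. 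Density of $\bigcup_m V_m$ in $V$ extends the identity to all $\eta\in V$, so $(u,v)$ solves \eqref{eq:c1} in the sense of Definition \ref{defn:c1}; Lemma \ref{lem:c1} supplies a continuous $H$-valued modification, and \eqref{leq:c1} is inherited via lower semi-continuity of norms under weak convergence.
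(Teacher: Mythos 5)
Your proposal is correct and follows the same two--step architecture as the paper's proof: an energy estimate obtained from the backward It\^o formula of Lemma \ref{lem:c1}, which yields \eqref{leq:c1} and hence uniqueness by linearity, followed by a Galerkin construction with weak limits in which the stochastic integral passes to the limit by weak continuity of the It\^o map (the paper invokes a theorem from Rozovskii for exactly this fact; your It\^o-isometry argument is the same point). The one place you genuinely deviate is the algebra of the energy identity: you complete the square, writing $2(\cM^{*}u,v)-\|v\|_{H^{\otimes d_1}}^{2}=\|\cM^{*}u\|_{H^{\otimes d_1}}^{2}-\|v-\cM^{*}u\|_{H^{\otimes d_1}}^{2}$, so that coercivity absorbs $\|\cM^{*}u\|^{2}$ exactly, $\|v-\cM^{*}u\|^{2}$ sits on the good side, and $\|v\|^{2}$ is recovered afterwards from the operator bound $\|\cM^{*}u\|^{2}\le(2\Lambda-\lambda)\|u\|_{V}^{2}+\Lambda\|u\|_{H}^{2}$; the paper instead uses the weighted Young inequality $2(\cM^{*}u,v)\le(1+\eps)\|\cM^{*}u\|^{2}+(1+\eps)^{-1}\|v\|^{2}$ and must choose $\eps$ so that $2\eps\Lambda-\lambda(1+\eps)+\eps<0$. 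Your version is cleaner and avoids the $\eps$-bookkeeping; both give the same constants up to dependence on $(\lambda,\Lambda,T)$. One technical point you pass over in the a priori step: before taking expectations and applying Gronwall you must know that $\int_{0}^{\cdot}(u(s),v^{k}(s))\,dW^{k}_{s}$ is a true (uniformly integrable) martingale rather than merely a local one, which is not automatic for an a priori solution; the paper first proves $E\sup_{t\le T}\|u(t)\|_{H}^{2}<\infty$ by a stopping-time localization (using that $u(0)$ is deterministic because $\sF_{0}$ is trivial) together with BDG. You should insert the same localization, or run the estimate up to stopping times and pass to the limit by Fatou; with that addition the argument is complete.
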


\begin{proof}
  \emph{Step 1}. Assume the existence of the solution of equation \eqref{eq:c1}
  in the sense of Definition \ref{defn:c1}. In view of Lemma \ref{lem:c1}, we
  have $u\in C([0,T],H)$ (a.s.). Now we deduce estimate \eqref{leq:c1}.

  First we claim that $E\sup_{t\leq T}\|u(t)\|_{H}^{2}< \infty$. Indeed,
  note that $u(\omega,0)\in H$ is $\sF_0$-measurable, thus is
  deterministic. Define a sequence of stopping times as
  $$\begin{array}{l}
  \tau_{n}(\omega)=\inf\{t;~\sup_{s\leq t}\|u(\omega,s)\|_{H}\geq n\}\wedge T.
  \end{array}$$
  It is clear that $\tau_{n}\uparrow T$ a.s..
  Then applying It\^o's formula to $\|u(t)\|_{H}^{2}$ and from
  Assumption \ref{ass:c1}, we have
  \begin{eqnarray*}
    \begin{split}
      \|u(t\wedge \tau_{n})\|_{H}^{2}
      =&\|u(0)\|_{H}^{2} - \int_{0}^{t\wedge\tau_n} \big[2\la u(s), \cL u(s)\ra
      + 2( (\cM^{k})^{*} u(s), v^k(s)) + 2\la u(s), f(s)\ra \\
      & - \|v(s)\|_{H^{\otimes d_1}}^{2}\big] d s + \int_{0}^{t\wedge\tau_n}
      2 ( u(s), v^{k}(s)) d W_{t}^{k}\\
      \leq &\|u(0)\|_{H}^{2} + C(\Lambda)\int_{0}^{T}\big(\|u(s)\|_{V}^{2}+
      \|v(s)\|_{H^{\otimes d_1}}^{2}+\|f(s)\|_{V^*}^{2}\big) d s\\
      &+2\int_{0}^{t\wedge\tau_n}
      ( u(s), v^{k}(s)) d W_{t}^{k}.
    \end{split}
  \end{eqnarray*}
  On the other hand, from the Burkholder-Davis-Gundy (BDG) inequality, we
  have
  \begin{eqnarray}\label{leq:c3}
    \begin{split}
      E \bigg|\sup_{t\leq \tau_n}
      \int_{0}^{t}( u(s), v^{k}(s)) d W_{t}^{k}\bigg|
      \leq C \bigg[E\int_{0}^{\tau_n}\|u(t)\|_{H}^{2}
      \| v(t)\|_{H^{\otimes d_1}}^{2} dt\bigg]^{1/2}\\
      \leq \frac{1}{4} E\sup_{t\leq \tau_n}\|u(t)\|_{H}^{2}
      + C E \int_{0}^{T} \|v(t)\|_{H^{\otimes d_1}}^{2} dt.
    \end{split}
  \end{eqnarray}
  Therefore, we have
  \begin{equation*}
    E\sup_{t\leq \tau_n}\|u(t)\|_{H}^{2}
    \leq 2 \|u(0)\|_{H}^{2} + C(\Lambda)E\int_{0}^{T}\big(\|u(t)\|_{V}^{2}+
      \|v(t)\|_{H^{\otimes d_1}}^{2}+\|f(t)\|_{V^*}^{2}\big) d s.
  \end{equation*}
  Note that the constant $C$ is independent of $n$. Passing $n$ to infinity,
  we obtain that $E\sup_{t\leq T}\|u(t)\|_{H}^{2} <\infty$.

  Now using It\^o's formula to $\|u(t)\|_{H}^{2}$ once more and from
  Assumption \ref{ass:c1}, we have
  \begin{eqnarray*}
    \begin{split}
      \|u(t)\|_{H}^{2}=&\|\vf\|_{H}^{2} + \int_{t}^{T} \big[2\la u(s), \cL u(s)\ra
      + 2( (\cM^{k})^{*} u(s), v^k(s)) + 2\la u(s), f(s)\ra \\
      & - \|v(s)\|_{H^{\otimes d_1}}^{2}\big] d s - \int_{t}^{T}
      2 ( u(s), v^{k}(s)) d W_{t}^{k}\\
      \leq &\|\vf\|_{H}^{2} + \int_{t}^{T} \big[2\la u(s), \cL u(s)\ra
      + (1+\eps)\|\cM^{*}u(s)\|_{H^{\otimes d_1}}^{2}
      + \frac{1}{1+\eps}\|v(s)\|_{H^{\otimes d_1}}^{2}\\
      & -\|v(s)\|_{H^{\otimes d_1}}^{2}+\eps\|u(s)\|_{V}^{2}+\frac{1}{\eps}
      \|f(s)\|_{V^*}^{2}\big]d s
      -\int_{t}^{T} 2 ( u(s), v^{k}(s)) d W^{k}_{t}\\
      \leq &\|\vf\|_{H}^{2} + \int_{t}^{T} \big[-2\eps\la u(s), \cL u(s)\ra
      + (1+\eps)( -\lambda\|u(s)\|_{V}^{2} + \Lambda\|u(s)\|_{H}^{2}) \\
      & - \frac{\eps}{1+\eps}\|v(s)\|_{H^{\otimes d_1}}^{2}+\eps\|u(s)\|_{V}^{2}
      +\frac{1}{\eps}
      \|f(s)\|_{V^*}^{2}\big]d s
      -\int_{t}^{T} 2 ( u(s), v^k(s)) d W^k_{t}\\
      \leq &\|\vf\|_{H}^{2} + \int_{t}^{T} \big\{[2\eps\Lambda
      -\lambda(1+\eps)+\eps]\|u(s)\|_{V}^{2} + (1+\eps)\Lambda\|u(s)\|_{H}^{2} \\
      & - \frac{\eps}{1+\eps}\|v(s)\|_{H^{\otimes d_1}}^{2}+\frac{1}{\eps}
      \|f(s)\|_{V^*}^{2}\big\}d s
      -\int_{t}^{T} 2 ( u(s), v^k(s)) d W^k_{t}.
    \end{split}
  \end{eqnarray*}
  Taking $\eps$ small enough such that $2\eps\Lambda
  -\lambda(1+\eps)+\eps<0$, we have
  \begin{eqnarray}\label{leq:c4}
    \begin{split}
      &\|u(t)\|_{H}^{2} + \int_{t}^{T} \big[\|u(s)\|_{V}^{2}
      + \|v(s)\|_{H^{\otimes d_1}}^{2} \big] d s\\
      &\leq C(\lambda,\Lambda)\bigg\{ \|\vf\|_{H}^{2}
      +\int_{t}^{T}\big[\|u(s)\|_{H}^{2}
      +\|f(s)\|_{V^*}^{2}\big]ds \bigg\} -
      2\int_{t}^{T} ( u(s), v^k(s)) d W^k_{s}.
    \end{split}
  \end{eqnarray}
  Since $E\sup_{t\leq T}\|u(t)\|_{H}^{2} <\infty$, repeating \eqref{leq:c3},
  we know that $\int_{0}^{\cdot} ( u(s), v^k(s)) d W^k_{s}$ is a uniformly
  integrable martingale. Then taking expectation on the both sides of
  \eqref{leq:c4} and from the Gronwall inequality, we have
  \begin{eqnarray}\label{leq:c5}
    \begin{split}
      \sup_{t\leq T}E\|u(t)\|_{H}^{2} + E\int_{0}^{T} \big[\|u(t)\|_{V}^{2}
      + \|v(t)\|_{H^{\otimes d_1}}^{2} \big] d t
      \leq Ce^{CT}\bigg\{ E\|\vf\|_{H}^{2}
      +E\int_{0}^{T}\|f(t)\|_{V^*}^{2}dt \bigg\}.
    \end{split}
  \end{eqnarray}
  Recalling \eqref{leq:c4} and from the BDG inequality, we get
  \begin{eqnarray*}
    \begin{split}
      E\sup_{t\leq T}\|u(t)\|_{H}^{2}
      \leq & C(\lambda,\Lambda,T)\bigg\{ \|\vf\|_{H}^{2}
      +\int_{0}^{T}\big[\|u(s)\|_{H}^{2}
      +\|f(s)\|_{V^*}^{2}\big] dt \bigg\}\\
      &+\frac{1}{2} E\sup_{t\leq T}\|u(t)\|_{H}^{2}
      + C E \int_{0}^{T} \|v(t)\|_{H^{\otimes d_1}}^{2} dt,
    \end{split}
  \end{eqnarray*}
  and this along with \eqref{leq:c5} yields estimate \eqref{leq:c1}.
  \medskip

  \emph{Step 2}. We use the Galerkin approximate to prove the existence.

  Fix a standard complete orthogonal basis $\{e_{i}:i=1,2,3,\dots\}$ in
  the space $H$ which is also an orthogonal basis in the space $V$.

  Consider the following system of BSDEs in $\bR^{n}$
  \begin{eqnarray}\label{eq:c2}\begin{split}
    u_{n}^{i}(t)=& (e_{i},\vf) + \int_{t}^{T} \bigg[ \la
    e_{i},\cL(s)e_{j}\ra u_{n}^{j}(s)+( e_{i},\cM^k(s)e_{j})
    v_{n}^{jk}(s)\\
    &~~~~+\la e_{i},f(s)\ra \bigg]ds
    -\int_{t}^{T}v_{n}^{ik}(s)dW^k_{s},\end{split}
  \end{eqnarray}
  with the unknown processes $u_{n}^{i}$ and
  $v_{n}^{i}=(v_{n}^{i1},\dots,v_{n}^{id_1})$ $(i=1,\dots,n)$
  taken values in $\bR$ and $\bR^{d_1}$, respectively.
  It is clear that
  $$E(e_{i},\vf)^{2}<\infty,~~~E\int_{0}^{T}\la
  e_{i},f(s)\ra^{2}ds<\infty.$$ Thus system \eqref{eq:c2} has the unique
  continuous solution (see e.g. \cite{PaPe90}). Define
  \begin{equation*}
    u_{n}(t)=\sum_{i=1}^{n} u_{n}^{i}(t)e_{i},~~~
    v_{n}(t)=\sum_{i=1}^{n} v_{n}^{i}(t)e_{i}.
  \end{equation*}
  Applying It\^o's formula to $\|u_{n}\|_{H}^{2}$ and
  from similar arguments as in Step 1, we have
  \begin{eqnarray}\label{leq:c2}
    \nonumber E\sup_{t\leq T}\|u_{n}(t)\|_{H}^{2}
    +E\int_{0}^{T}\big(\|u_{n}(t)\|_{V}^{2}
    +\|v_{n}(t)\|_{H^{\otimes d_1}}^{2} \big)dt\\
    \leq C(\lambda,\Lambda) \bigg(E\int_{0}^{T}\|f(t)\|_{V^*}^{2}dt
    + E\|\vf\|_{H}^{2}\bigg).
  \end{eqnarray}
  This inequality implies that there exists a subsequence $\{n'\}$ of $\{n\}$
  and a pair $(u,v)\in L^{2}(\Omega\times (0,T),\sP,V\times H^{\otimes d_1})$
  such that
  \begin{eqnarray*}
    \begin{split}
      & u_{n'}\rrow u ~~~~\textrm{weakly in}~~L^{2}(\Omega\times (0,T),\sP,V),\\
      & v_{n'}\rrow v ~~~~\textrm{weakly in}
      ~~L^{2}(\Omega\times (0,T),\sP,H^{\otimes d_1}).
    \end{split}
  \end{eqnarray*}

  Let $\xi$ be an arbitrary bounded random variable on $(\Omega,\sF)$ and
  $\psi$ be an arbitrary bounded measurable function on $[0,T]$.

  From equation \eqref{eq:c2}, for $n\in \mathbb{N}^*$ and
  $e_{i}\in\{e_{i}\}$, where $i\leq n$, we have
  \begin{eqnarray*}\begin{split}
    E\int_{0}^{T}\xi\psi(t)(e_i,u_{n'}(t))dt~
    =~&E\int_{0}^{T}\xi\psi(t)\bigg\{ (e_{i},\vf) + \int_{t}^{T} \bigg[\la
    e_{i},\cL u_{n'}(s)\ra +( e_{i},\cM^k v^k_{n'}(s)) \\
    &~~~~~~+\la e_{i},f(s)\ra \bigg]ds
    -\int_{t}^{T}(e_i,v_{n'}^k(s))dW^k_{s}\bigg\}dt,\end{split}
  \end{eqnarray*}
  Evidently, we have
  \begin{equation*}
    E\int_{0}^{T}\xi\psi(t)(e_i,u_{n'}(t))dt
    \rrow E\int_{0}^{T}\xi\psi(t)(e_i,u(t))dt.
  \end{equation*}

  In view of the second condition of Assumption \ref{ass:c1} and
  estimate \eqref{leq:c2}, we get
  \begin{equation*}
    E\bigg| \int_{t}^{T} \xi \la e_{i},\cL u_{n'}(s)\ra ds\bigg|<C<\infty,
  \end{equation*}
  where the constant $C$ is independent of $n'$. It is also clear that
  \begin{equation*}
    E\int_{t}^{T} \xi \la e_{i},\cL u_{n'}(s)\ra ds
    \rrow E\int_{t}^{T} \xi \la e_{i},\cL u(s)\ra ds,~~\forall~t\in [0,T].
  \end{equation*}
  Hence from Fubini's Theorem and Lebesgue's Dominated Convergence Theorem,
  we have
  \begin{eqnarray*}\begin{split}
    E\int_{0}^{T}\xi\psi(t)\int_{t}^{T}\la e_{i},\cL u_{n'}(s)\ra ds dt
    &=\int_{0}^{T} \psi(t) E \int_{t}^{T}\xi \la e_{i},\cL u_{n'}(s)\ra ds dt,
    \\
    &\rrow \int_{0}^{T} \psi(t) E \int_{t}^{T}\xi \la e_{i},\cL u(s)\ra ds
    dt. \end{split}
  \end{eqnarray*}
  Similarly, we have
  \begin{eqnarray*}
    \begin{split}
      E\int_{0}^{T}\xi\psi(t)\int_{t}^{T}( e_{i},\cM^k v^k_{n'}(s)) ds dt
      \rrow E\int_{0}^{T}\xi\psi(t)\int_{t}^{T}( e_{i},\cM^k v^k(s)) ds dt.
    \end{split}
  \end{eqnarray*}

  From the second condition of Assumption \ref{ass:c1} and
  estimate \eqref{leq:c2}, we have
  \begin{equation*}
    E\bigg|\xi \int_{t}^{T} ( e_{i}, v_{n'}^k(s)) dW^k_{s}\bigg|<C<\infty,
  \end{equation*}
  where the constant $C$ is independent of $n'$. Since
  $$( e_{i}, v_{n'}^k(\cdot)) \rrow ( e_{i}, v^k(\cdot))~~~
  \textrm{weakly in}~L^{2}(0,T),$$
  From a known result (see \cite[p. 63, Thm. 4]{Rozo90}), we have that
  for every $t\in [0,T]$,
  $$\int_{t}^{T} ( e_{i}, v_{n'}^k(s)) dW^k_{s}
  \rrow \int_{t}^{T} ( e_{i}, v^k(s)) dW^k_{s} ~~~
  \textrm{weakly in}~L^{2}(\Omega, \sF_{T},\bR).$$
  Hence, using Lebesgue's Dominated Convergence Theorem, we have
  \begin{equation*}
    E\int_{0}^{T}\xi\psi(t)\int_{t}^{T}( e_{i},v^k_{n'}(s)) dW^k_{s} dt
    \rrow E\int_{0}^{T}\xi\psi(t)\int_{t}^{T}( e_{i},v^k(s)) dW^k_{s} dt.
  \end{equation*}

  To sum up, we obtain that for a.e. $(\omega,t)\in \Omega\times[0,T]$,
  \begin{equation*}
    ( e_{i},u(t)) = (e_{i},\vf) +
    \int_{t}^{T}\la e_{i}, \cL u(s) + \cM v(s) + f(s)\ra ds
    - \int_{t}^{T} ( e_{i},v^k(s)) dW^k_{s}.
  \end{equation*}
  Thus the existence is proved and our proof is complete.
\end{proof}

\begin{proof}[Proof of Theorem \ref{thm:b}]
In order to apply Proposition \ref{prop:c1}, we set
\begin{equation*}
  H = L^{2}(\bR^{d})=H^{0},~~V=H^{1},~~V^*=H^{-1},~~
\end{equation*}
and for any $u\in H^{1},v\in H^{0}$, define
\begin{eqnarray}\label{eq:d1}
  \begin{split}
    &\cL u = D_{i}(a^{ij}D_{j}u)+b^{i}D_{i}u-cu,\\
    &\cM^{k}v = D_{i}(\sigma^{ik}v)+\nu^k v.
  \end{split}
\end{eqnarray}
The inner product in $H$ (and the duality product between $V$ and $V^{*}$)
  is defined by
  \begin{equation*}
    (u,v)=\int_{\bR^d}u(x)v(x)dx.
  \end{equation*}
It is clear that $(\cM^{k})^*(t)u=\sigma^{ik}D_{i}u+\nu^ku$ for $u\in H^1$.
From Assumption \ref{ass:b2} and Green's formula, we have that for any $u\in
H^1$,
\begin{eqnarray*}
  \begin{split}
    2 & \la u,\cL u \ra + \|\cM^* u\|_{H}^2\\
    =&~2\int_{\bR^d}\big[-a^{ij} D_{i}u D_{j}u +b^{i}u D_{i}u
    -c |u |^{2}\big]dx
    +\sum_{k=1}^{d_1}\int_{\bR^d}\big|\sigma^{ik} D_{i}u +\nu^{k}
    u \big|^{2}dx\\
    \leq&~-\int_{\bR^d}( 2a^{ij}-\sigma^{ik}\sigma^{kj})
    D_{i}u D_{j}u dx + \frac{\kappa}{2} \|u\|_{1}^{2}+ C(\kappa,K)
    \|u\|_{0}^{2} \\
    \leq&~-\frac{\kappa}{2}\|u\|_{1}^{2}+ C(\kappa,K)\|u\|_{0}^{2}.
  \end{split}
\end{eqnarray*}
Moreover, for any $u,v\in H^1$, we have
\begin{eqnarray*}
  \begin{split}
     \la \cL u, v \ra~
     =~&\int_{\bR^d}\big(-a^{ij} D_{i}u D_{j}v +b^{i} v D_{i}u
     -c u v \big) dx\\
     \leq~ & C(K)\|u\|_{1}\|v\|_{1},
  \end{split}
\end{eqnarray*}
which implies that $\|\cL u\|_{-1}\leq C(K)\|u\|_{1}$. Then Theorem
\ref{thm:b} follows from Proposition \ref{prop:c1}. The proof is complete.
\end{proof}

\section{Proof of Theorem \ref{thm:b1}}

First we study the equations with the coefficients $a$ and $\sigma$
independent of the variable $x$.

\begin{prop}\label{lem:d2}
  Let Assumptions \ref{ass:b1} and \ref{ass:b2} be satisfied with the
  functions $a,\sigma$ independent of $x$. Suppose
  $F\in \bH^{0},\phi \in L^{2}(\Omega,\sF_{T},H^{1}).$
  Then equation \eqref{eq:a2} has a unique strong
  solution $(p,q)$ in the
  space $\mathbb{H}^{2}\times\mathbb{H}^{1}$
  such that $p\in C([0,T],H^{1})~(a.s.)$, and moreover,
  \begin{equation}\label{leq:d2}
    \intl p \intl_{2}^{2} +\intl q \intl_{1}^{2}
    +E\sup_{t\leq T}\|p(t,\cdot)\|_{1}^{2}
    \leq C(\kappa,K,T)\big{(} \intl F \intl_{0}^{2} + E \|
    \phi\|_{1}^{2}\big{)}.
  \end{equation}
\end{prop}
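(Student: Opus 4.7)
The plan is to first invoke Theorem \ref{thm:b} to produce a weak solution $(p,q)\in\bH^1\times\bH^0$ of the equation with $p\in C([0,T],L^2)$ a.s., and then to lift the regularity to $(\bH^2,\bH^1)$ by the method of finite differences in $x$, which is available here because $a$ and $\sigma$ commute with translations. Note that equation \eqref{eq:a2} coincides with its divergence-form rewriting \eqref{eq:a1} when $a,\sigma$ are independent of $x$, so Theorem \ref{thm:b} applies directly.

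For each $\ell\in\{1,\dots,d\}$ and $h\neq 0$, define $T^h_\ell f(x)=f(x+he_\ell)$ and $\delta^h_\ell=(T^h_\ell-I)/h$. Applying $\delta^h_\ell$ to the equation and using the Leibniz identity $\delta^h_\ell(fg)=(T^h_\ell f)(\delta^h_\ell g)+(\delta^h_\ell f)g$, the pair $(\delta^h_\ell p,\delta^h_\ell q)$ is seen to be a weak solution of a BSPDE of the same form, with the same $a,\sigma$, with $b,c,\nu$ replaced by $T^h_\ell b,T^h_\ell c,T^h_\ell\nu$ (still bounded by $K$), terminal datum $\delta^h_\ell\phi$, and free term
\begin{equation*}
G^h_\ell:=(\delta^h_\ell b^i)D_ip-(\delta^h_\ell c)p+(\delta^h_\ell\nu^k)q^k+\delta^h_\ell F.
\end{equation*}
I would then apply It\^o's formula (Lemma \ref{lem:c1}) to $\|\delta^h_\ell p(t)\|_0^2$ to aim at the uniform-in-$h$ bound
\begin{equation*}
E\sup_{t\le T}\|\delta^h_\ell p(t)\|_0^2+E\int_0^T\!\big(\|D\delta^h_\ell p\|_0^2+\|\delta^h_\ell q\|_0^2\big)ds\le C\big(E\|\phi\|_1^2+\intl F\intl_0^2\big).
\end{equation*}

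Two features are crucial in deriving this estimate. First, for the top-order contribution, the super-parabolic condition together with Young's inequality applied to the cross term $2\sigma^{ik}(D_i\delta^h_\ell p,\delta^h_\ell q^k)$ with a coefficient $\mu\in(1,\,1+\kappa/K)$ ensures that, after combining with the quadratic-variation contribution $+\|\delta^h_\ell q\|_0^2$, both $\|D\delta^h_\ell p\|_0^2$ and $\|\delta^h_\ell q\|_0^2$ appear with strictly positive coefficients depending only on $\kappa$ and $K$. Second, for the dangerous lower-order pairings $(\delta^h_\ell p,(\delta^h_\ell b^i)D_ip)$ and their analogues for $c,\nu$, which would naively demand derivatives of the coefficients, I would use the identity $(\delta^h_\ell f)g=\delta^h_\ell(fg)-(T^h_\ell f)(\delta^h_\ell g)$ together with the duality bound $\|\delta^h g\|_{H^{-1}}\le\|g\|_0$ (itself a consequence of $\|\delta^{-h}\eta\|_0\le\|D_\ell\eta\|_0$); this reduces each such pairing to an expression of the form $\eps\|D\delta^h_\ell p\|_0^2+C_\eps(\|\delta^h_\ell p\|_0^2+\|Dp\|_0^2+\|p\|_0^2+\|q\|_0^2)$ which can be absorbed. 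After absorbing the $\eps$-terms, applying the Burkholder--Davis--Gundy inequality to the martingale term (as in Proposition \ref{prop:c1}), invoking the first-order estimate \eqref{leq:b1} to control the lower-order norms, and Gronwall on $\|\delta^h_\ell p\|_0^2$, the claimed uniform bound follows.

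Passing $h\to 0$ along a subsequence (by weak compactness in $\bH^1\times\bH^0$) gives $D_\ell p\in\bH^1$ and $D_\ell q\in\bH^0$ for each $\ell$, hence $p\in\bH^2$ and $q\in\bH^1$ together with estimate \eqref{leq:d2}. Continuity $p\in C([0,T],H^1)$ a.s.\ will follow from the $H^1$-level analogue of Lemma \ref{lem:c1} (applied with $V=H^2,\,H=H^1,\,V^*=L^2$) to the equation viewed as a BSPDE valued in $L^2$, and uniqueness within the strong class is inherited from the uniqueness of the weak solution in Theorem \ref{thm:b}. The main obstacle I anticipate is the bookkeeping in the energy estimate: arranging the Young-inequality constants so that $\|D\delta^h_\ell p\|_0^2$ and $\|\delta^h_\ell q\|_0^2$ end up with simultaneously positive coefficients, which is possible only thanks to the strict super-parabolic gap $\kappa>0$.
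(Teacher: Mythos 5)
Your argument is correct, but it takes a genuinely different route from the paper. The paper does not use difference quotients at all: it applies its abstract Galerkin result (Proposition \ref{prop:c1}) a second time, now with the shifted Gelfand triple $V=H^{2}$, $H=H^{1}$, $V^{*}=H^{0}$, verifying the coercivity condition \eqref{con:c1} for $\cL=a^{ij}(t)D_{ij}$, $\cM^{k}=\sigma^{ik}(t)D_{i}$ directly at the $H^{1}$ level (this is where the $x$-independence of $a,\sigma$ enters for the authors: it makes $\cL$ commute with $D_{l}$ so that Green's formula gives the $H^{1}$-coercivity). The lower-order terms $b,c,\nu$ are then handled by a clean bootstrap: take the weak solution $(p,q)\in\bH^{1}\times\bH^{0}$ from Theorem \ref{thm:b}, freeze $\tilde F=b^{i}D_{i}p-cp+\nu^{k}q^{k}+F\in\bH^{0}$ as a free term, solve the simplified equation in $\bH^{2}\times\bH^{1}$, and identify the two solutions by uniqueness of the weak solution. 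Your route instead stays entirely within the first-order theory of Theorem \ref{thm:b} and lifts regularity by Nirenberg translations, absorbing the merely-bounded coefficients $b,c,\nu$ inside the difference-quotient energy estimate via the summation-by-parts identity and the bound $\|\delta^{h}g\|_{H^{-1}}\leq\|g\|_{0}$ --- which is exactly the right device, since those coefficients have no derivatives to spare. What the paper's approach buys is economy: no uniform-in-$h$ bookkeeping, no weak-compactness passage, and the $E\sup_{t\leq T}\|p(t,\cdot)\|_{1}^{2}$ bound comes for free from estimate \eqref{leq:c1}. What yours buys is that it is closer to the classical PDE interior-regularity argument and never needs the coercivity verification in the higher norm; the price is the commutator analysis, and (as you note) some extra care to recover the $E\sup_{t\leq T}\|\cdot\|_{1}^{2}$ term after the weak limit, which is most easily done as you propose, by re-applying Lemma \ref{lem:c1} at the $H^{1}$ level once the $\bH^{2}\times\bH^{1}$ regularity is in hand. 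Both arguments rely on the same Young-inequality balancing of the cross term against the quadratic variation under the strict super-parabolicity gap, so your ``main obstacle'' is real but is resolved identically in the paper's own computations.
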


\begin{proof}
  \emph{Step 1.} In this step we assume, in addition, that
  $b=0,c=0,\nu=0$. Then equation \eqref{eq:a2} has the following simple form
  \begin{equation}\label{eq:d3}
    dp=-\big[a^{ij}(t)D_{ij}p+\sigma^{ik}(t)D_{i}q^{k}+F\big]dt
    +q^{k}dW_{t}^{k},~~~~~p\big|_{t=T}=\phi.
  \end{equation}
  In order to apply Proposition \ref{prop:c1}, we set
  \begin{eqnarray*}\begin{split}
    &H = H^{1},~~~V=H^{2},~~~V^*=H^{0},\\
    &\cL(t) = a^{ij}(t)D_{ij},~~~~
    \cM^k(t) = \sigma^{ik}(t)D_{i}.
    \end{split}
  \end{eqnarray*}
  The inner product in $H$ (and the duality product between $V$ and $V^{*}$)
  is defined by
  \begin{equation*}
    (u,v)=\int_{\bR^d}u(x)v(x)dx
    + \sum_{l=1}^{d}\int_{\bR^d}D_{l}u(x)D_{l}v(x)dx.
  \end{equation*}
  It is clear that $\cM^*(t)=\sigma^{i}(t)D_{i}$.
  From Green's formula and the super-parabolic condition
  (see Assumption \ref{ass:b2}), for any $u\in H^{2}$, we have
  \begin{eqnarray*}
    \begin{split}
      2 \la u, & \cL u\ra + \|\cM^* u\|_{H}^{2}\\
      =~&-2\int_{\bR^d}a^{ij}(t)D_{i}u(x)D_{j}u(x)dx
      -2 \sum_{l=1}^{d}\int_{\bR^d}a^{ij}(t)D_{il}u(x)D_{jl}u(x)dx\\
      &+\int_{\bR^d}|\sigma(t) Du(x)|^{2}dx
      +\sum_{l=1}^{d}\int_{\bR^d}|\sigma(t) Du_{x^l}(x)|^{2}dx\\
      \leq~&-\kappa~\bigg[\int_{\bR^d}|Du(x)|^{2}dx
      +\sum_{l=1}^{d}\int_{\bR^d}|Du_{x^l}(x)|^{2}dx\bigg]\\
      \leq~&-\kappa \|u\|_{2}^{2}+\kappa\|u\|_{1}^{2}.
    \end{split}
  \end{eqnarray*}
  Moreover, it is clear that $\|\cL u\|_{0}\leq C(K) \|u\|_{2}$. Thus
  condition \eqref{con:c1} is satisfied. Then from Proposition
  \ref{prop:c1}, there exists a unique function pair
  $(p,q)\in\bH^2\times\bH^1$ s.t. $p\in C([0,T],H^{1})~(a.s.)$,
  satisfying the equation
  \begin{equation*}
    p(t,\cdot)=\phi(\cdot)+\int_{t}^{T}\big[\cL p(s,\cdot) +\cM q(s,\cdot)
    +F(s,\cdot)\big]dt
    -\int_{0}^{T} q^k(s,\cdot) dW^k_{t},
  \end{equation*}
  in the sense of Definition \ref{defn:c1}, which means that the above
  equation holds in the space $L^{2}$ for any $t\in[0,T]$ and a.e.
  $\omega\in\Omega$, and furthermore, the pair $(p,q)$ is the
  strong solution of equation \eqref{eq:d3}.

  It is clear that a strong solution of
  equation \eqref{eq:d3} is actually
  a weak solution of equation \eqref{eq:d3} (in the sense of Definition
  \ref{defn:b1} (i)). Therefore, the uniqueness of the strong
  solution is implied by the uniqueness of the weak solution.
  \medskip

  \emph{Step 2.} Now we remove the additional assumption made in Step 1.

  Since the functions $a$ and $\sigma$ are independent of $x$, we can rewrite
  equation \eqref{eq:a2} into divergence form like \eqref{eq:a1}.
  In view of Theorem \ref{thm:b}, equation \eqref{eq:a2} has a unique weak
  solution $(p,q)$ in the
  space $\mathbb{H}^{1}\times\mathbb{H}^{0}$. Consider the following
  \begin{equation*}
    du =-\big( a^{ij}D_{ij}u+\sigma^{ik}D_{i}v^k
    +\tilde{F} \big)dt + v^{k}dW^k_{t},
  \end{equation*}
  where $\tilde{F}=b^i D_{i}p-cp+\nu^k q^k +F$ belongs to $\bH^0$. From the
  result in Step 1, the above equation has a unique
  solution $(u,v)$ in the
  space $\mathbb{H}^{2}\times\mathbb{H}^{1}$
  such that $u\in C([0,T],H^{1})~(a.s.)$ and $u,v$ satisfy estimate
  \eqref{leq:d2}. By the uniqueness of the weak solution, we have that
  $p=u$ and $q=v$. The proof is complete.
\end{proof}

Next, we prove a perturbation result.

\begin{lem}\label{lem:d3}
  Let Assumptions \ref{ass:b1} and \ref{ass:b2} be satisfied with
  $b=0,c=0,\nu=0$. Assume that for a constant $\delta>0$ and
  for any $(\omega,t,x)$ we have
  \begin{equation}\label{con:d1}
    |a(t,x)-a_{0}(t)|\leq\delta,\quad |\sigma(t,x)-\sigma_{0}(t)|\leq\delta,
  \end{equation}
  where $a_{0}(t)$ and $\sigma_{0}(t)$ are some functions of $(t,\omega)$
  satisfying Assumption \ref{ass:b1} and \ref{ass:b2}.
  Suppose
  $F\in \bH^{0},\phi \in L^{2}(\Omega,\sF_{T},H^{1}).$

  Under the above assumptions, we assert that there exists a constant
  $\delta(\kappa,K,T)>0$ such that if $\delta\leq\delta(\kappa,K,T)$, then
  equation \eqref{eq:a2} has a unique strong
  solution $(p,q)$ in the
  space $\mathbb{H}^{2}\times\mathbb{H}^{1}$
  such that $p\in C([0,T],H^{1})~(a.s.)$ and moreover,
  \begin{equation}\label{leq:d4}
    \intl p \intl_{2}^{2} +\intl q \intl_{1}^{2}
    +E\sup_{t\leq T}\|p(t,\cdot)\|_{1}^{2}
    \leq C(\kappa,K,T)\big{(} \intl F \intl_{0}^{2} + E \|
    \phi\|_{1}^{2}\big{)}.
  \end{equation}
\end{lem}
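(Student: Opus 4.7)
The plan is to prove Lemma \ref{lem:d3} via a Banach fixed-point argument that uses Proposition \ref{lem:d2} as the linear existence and regularity theory for the frozen-coefficient equation with the $x$-independent pair $(a_0,\sigma_0)$, treating the differences $a-a_0$ and $\sigma-\sigma_0$ as small corrections that are absorbed into the source term.

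Concretely, I would define a map $\Phi:\bH^2\times\bH^1\rrow\bH^2\times\bH^1$ as follows. For each $(\bar p,\bar q)\in\bH^2\times\bH^1$, set
$$G(\bar p,\bar q)\;:=\;(a^{ij}-a_0^{ij})D_{ij}\bar p+(\sigma^{ik}-\sigma_0^{ik})D_i\bar q^k+F,$$
which lies in $\bH^0$ and, by \eqref{con:d1}, satisfies
$$\intl G(\bar p,\bar q)\intl_0^2\;\leq\;C\delta^2\bigl(\intl\bar p\intl_2^2+\intl\bar q\intl_1^2\bigr)+2\intl F\intl_0^2.$$
Define $(p,q):=\Phi(\bar p,\bar q)$ to be the unique strong solution, provided by Proposition \ref{lem:d2}, of
$$dp=-\bigl[a_0^{ij}(t)D_{ij}p+\sigma_0^{ik}(t)D_i q^k+G(\bar p,\bar q)\bigr]dt+q^k\,dW_t^k,\quad p(T)=\phi,$$
which in particular belongs to $\bH^2\times\bH^1$ with $p\in C([0,T],H^1)$ a.s.

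Next, the difference $(p_1-p_2,q_1-q_2)$ of two outputs solves the same frozen-coefficient equation with zero terminal datum and source $(a^{ij}-a_0^{ij})D_{ij}(\bar p_1-\bar p_2)+(\sigma^{ik}-\sigma_0^{ik})D_i(\bar q_1^k-\bar q_2^k)$. Estimate \eqref{leq:d2} then gives
$$\intl p_1-p_2\intl_2^2+\intl q_1-q_2\intl_1^2\;\leq\;C_0(\kappa,K,T)\,\delta^2\bigl(\intl\bar p_1-\bar p_2\intl_2^2+\intl\bar q_1-\bar q_2\intl_1^2\bigr).$$
Choosing $\delta=\delta(\kappa,K,T)$ so small that $C_0\delta^2\leq 1/4$ makes $\Phi$ a strict contraction on $\bH^2\times\bH^1$; its unique fixed point is by construction a strong solution of \eqref{eq:a2} in the required class, and uniqueness among strong solutions follows from applying the same contraction estimate to the difference of any two solutions (which makes $F=0$ and $\phi=0$).

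Finally, to obtain \eqref{leq:d4} I would apply Proposition \ref{lem:d2} once more to the fixed point $(p,q)$ itself, with source $G(p,q)$, yielding
$$\intl p\intl_2^2+\intl q\intl_1^2+E\sup_{t\leq T}\|p(t,\cdot)\|_1^2\;\leq\;C\bigl(\delta^2(\intl p\intl_2^2+\intl q\intl_1^2)+\intl F\intl_0^2+E\|\phi\|_1^2\bigr),$$
and then shrinking $\delta(\kappa,K,T)$ further if necessary to absorb the first term on the right into the left. The main obstacle is purely bookkeeping: one must use the single constant $C(\kappa,K,T)$ from Proposition \ref{lem:d2} consistently when calibrating $\delta$ for both the contraction and the absorption, but no new analytic ingredient beyond Proposition \ref{lem:d2} is needed.
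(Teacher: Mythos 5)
Your proposal is correct and follows essentially the same route as the paper: the authors also define the solution operator for the frozen-coefficient equation with the perturbation terms moved into the source, invoke Proposition \ref{lem:d2} for its well-posedness, show it is a contraction on $\bH^{2}\times\bH^{1}$ for $\delta$ small, and then obtain \eqref{leq:d4} by applying estimate \eqref{leq:d2} to the fixed point and absorbing the $\delta$-small term. The only cosmetic difference is that you track the constant as $C\delta^{2}$ where the paper writes $C\delta$; both yield the same smallness condition on $\delta(\kappa,K,T)$.
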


\begin{proof}
  In view of Proposition \ref{lem:d2}, we know that for any
  $(u,v)\in \bH^{2}\times\bH^1$, the equation
  \begin{equation}\label{eq:d2}
    \left\{\begin{array}{l}
      dp =-\big{[} a_{0}^{ij}D_{ij}p+\sigma_{0}^{ik}D_{i}q^k
      +(a^{ij}-a_{0}^{ij})D_{ij}u\\~~~~~~~~~~~~~~~~~~
      +(\sigma^{ik}-\sigma_{0}^{ik})D_{i}v^{k} +F \big{]}dt + q^{k}dW^k_{t},\\
      p(T,x)=\phi(x),~~~ x\in \mathbb{R}^{d}
    \end{array}\right.
  \end{equation}
  has a unique solution $(p,q)\in \bH^{2}\times \bH^{1}$ such that
  $p\in C([0,T],H^{1})$ (a.s.). By denoting $(p,q)=T(u,v)$, we define a
  linear operator
  $$T:~~\bH^{2}\times \bH^{1}~~\rightarrow~~\bH^{2}\times \bH^{1}.$$
  Then from estimate \eqref{leq:d2},
  we can easily obtain that for any
  $(u_{i},v_{i})\in \bH^{2}\times \bH^{1},~i=1,2$,
  \begin{equation}
    \|T(u_1-u_2,v_1-v_2)\|_{2,1}^{2}\leq C \delta
    \|(u_1-u_2,v_1-v_2)\|_{2,1}^{2},
  \end{equation}
  where we denote $$\|(u,v)\|_{2,1}^{2}=\intl u \intl _{2}^{2}
  +\intl v \intl _{1}^{2}.$$
  Taking $\delta = (2C)^{-1}= (2C(\kappa,K,T))^{-1}$, we have that the
  operator $T$ is a contraction in $\bH^{2}\times \bH^{1}$, which implies
  the existence of the solution of equation \eqref{eq:a2} in the space
  $\bH^{2}\times \bH^{1}$.

  Next, applying estimate \eqref{leq:d2} to
  equation \eqref{eq:d2},
  we have
  \begin{equation*}
    \|(p,q)\|_{2,1}^{2}\leq C\delta \|(p,q)\|_{2,1}^{2}+
    C\big( \intl F \intl_{0,\bR^{d}_{+}}^{2} + E\|\phi\|_{1,\bR^{d}_{+}}^{2}\big).
  \end{equation*}
  Taking $\delta = (2C)^{-1}$, we obtain the required estimate, which also
  implies the uniqueness.
\end{proof}


Now we prove a priori estimate for the strong solution of equation
\eqref{eq:a2}.

\begin{lem}\label{lem:d4}
  Let the conditions of Theorem \ref{thm:b1} be satisfied. In addition,
  assume that the function pair $(p,q)\in \bH^{2}\times\bH^1$ is a strong
  solution of equation \eqref{eq:a2}. Then there exists a constant $C$
  depending only on $K,\kappa,T$ and the function $\gamma$ such that
  \begin{equation}\label{leq:d5}
    \intl p \intl_{2}^{2} +\intl q \intl_{1}^{2}
    +E\sup_{t\leq T}\|p(t,\cdot)\|_{1}^{2}
    \leq C\big{(} \intl F \intl_{0}^{2} + E \|
    \phi\|_{1}^{2}\big{)}.
  \end{equation}
\end{lem}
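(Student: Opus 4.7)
\emph{Localization and frozen coefficients.} The plan is a classical freezing-of-coefficients argument via partition of unity. Using Assumption~\ref{ass:b3}, fix $r>0$ with $\gamma(r)\le\delta(\kappa,K,T)$, where $\delta(\kappa,K,T)$ is the threshold of Lemma~\ref{lem:d3}. Pick a partition of unity $\{\zeta_n\}\subset C_0^\infty(\bR^d)$ with $\sum_n\zeta_n^2\equiv 1$, $\mathrm{supp}\,\zeta_n\subset\bar B_r(x_n)$, uniformly bounded overlap, and $|D^k\zeta_n|\le C_k r^{-k}$. Let $\tau_n$ denote the nearest-point projection onto $\bar B_r(x_n)$ and set $\bar a_n(t,x)=a(t,\tau_n(x))$, $\bar\sigma_n(t,x)=\sigma(t,\tau_n(x))$; these inherit Assumptions~\ref{ass:b1}--\ref{ass:b2}, coincide with $a,\sigma$ on $\mathrm{supp}\,\zeta_n$, and satisfy $|\bar a_n(t,\cdot)-a(t,x_n)|,\,|\bar\sigma_n(t,\cdot)-\sigma(t,x_n)|\le\gamma(r)$ globally. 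Multiplying \eqref{eq:a2} by $\zeta_n$ and using Leibniz (together with $\zeta_n a=\zeta_n\bar a_n$ and $\zeta_n\sigma=\zeta_n\bar\sigma_n$ on $\mathrm{supp}\,\zeta_n$) shows that $(\zeta_np,\zeta_nq)\in\bH^2\times\bH^1$ is a strong solution of
\begin{equation*}
d(\zeta_np)=-\bigl[\bar a_n^{ij}D_{ij}(\zeta_np)+\bar\sigma_n^{ik}D_i(\zeta_nq^k)+G_n\bigr]dt+\zeta_nq^k\,dW^k_t,\quad(\zeta_np)(T,\cdot)=\zeta_n\phi,
\end{equation*}
where $G_n$ collects the commutator and remaining lower-order terms
\begin{equation*}
G_n=-\bar a_n^{ij}\bigl(2D_i\zeta_n\,D_jp+D_{ij}\zeta_n\cdot p\bigr)-\bar\sigma_n^{ik}D_i\zeta_n\cdot q^k+\zeta_n\bigl(b^iD_ip-cp+\nu^kq^k+F\bigr),
\end{equation*}
depending on $(p,q)$ only through $p$, $Dp$, and $q$.

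\emph{Local estimate and summation.} Since $\gamma(r)\le\delta(\kappa,K,T)$, Lemma~\ref{lem:d3} applies to the above equation (with $b=c=\nu=0$ and forcing $G_n$) and yields
\begin{equation*}
\intl\zeta_np\intl_2^2+\intl\zeta_nq\intl_1^2+E\sup_{t\le T}\|\zeta_np(t,\cdot)\|_1^2\le C_0\bigl(\intl G_n\intl_0^2+E\|\zeta_n\phi\|_1^2\bigr).
\end{equation*}
Bounded overlap gives $\sum_n\intl G_n\intl_0^2\le C_r\bigl(\intl F\intl_0^2+\intl p\intl_1^2+\intl q\intl_0^2\bigr)$ and $\sum_n E\|\zeta_n\phi\|_1^2\le C_r E\|\phi\|_1^2$, while Leibniz in reverse yields $\sum_n\|\zeta_nu\|_k^2\ge\tfrac12\|u\|_k^2-C_r\|u\|_{k-1}^2$ for $k=1,2$. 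Summing the local estimates over $n$ then produces
\begin{equation*}
\intl p\intl_2^2+\intl q\intl_1^2+E\sup_{t\le T}\|p(t,\cdot)\|_1^2\le C_1\bigl(\intl F\intl_0^2+E\|\phi\|_1^2+\intl p\intl_1^2+\intl q\intl_0^2\bigr).
\end{equation*}

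\emph{Closing the bootstrap.} Via interpolation $\|p\|_1^2\le\eta\|p\|_2^2+C(\eta)\|p\|_0^2$ it suffices to control $\intl p\intl_0^2+\intl q\intl_0^2$. For this, apply It\^o's formula to $\|p(t,\cdot)\|_0^2$ (legitimate since $(p,q)\in\bH^2\times\bH^1$ makes the drift an $L^2$-valued process) and estimate the problematic pairings crudely by Young's inequality, e.g.\ $|(p,a^{ij}D_{ij}p)|\le\eta\|p\|_2^2+C(\eta,K)\|p\|_0^2$ and $|(p,\sigma^{ik}D_iq^k)|\le\eta\|q\|_1^2+C(\eta,K)\|p\|_0^2$, while absorbing the $+\|q\|_0^2$ from the quadratic variation into half of the $-\|q\|_0^2$ produced by the drift identity together with $|(p,\nu^kq^k)|\le\tfrac14\|q\|_0^2+4K^2\|p\|_0^2$. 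Taking expectation and invoking Gronwall yields, for every $\eta>0$,
\begin{equation*}
\intl p\intl_0^2+\intl q\intl_0^2\le C(\eta)\bigl(\intl F\intl_0^2+E\|\phi\|_0^2\bigr)+\eta C'\bigl(\intl p\intl_2^2+\intl q\intl_1^2\bigr).
\end{equation*}
Feeding this into the previous inequality and choosing $\eta$ small enough to absorb the $\bH^2\times\bH^1$-terms on the right establishes \eqref{leq:d5}.

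\emph{Main obstacle.} The chief difficulty is closing the bootstrap: Lemma~\ref{lem:d3} only controls the top-order norms and so produces $\intl p\intl_1^2,\,\intl q\intl_0^2$ on the right, and since $a,\sigma$ are not differentiable one cannot recast \eqref{eq:a2} in divergence form and invoke Theorem~\ref{thm:b} to bound them. The It\^o step circumvents this by tolerating a small multiple of $\intl p\intl_2^2+\intl q\intl_1^2$, which is reabsorbed at the end. A secondary technical point is constructing $\bar a_n,\bar\sigma_n$ that simultaneously agree with $a,\sigma$ on $\mathrm{supp}\,\zeta_n$, satisfy super-parabolicity globally, and have oscillation at most $\gamma(r)$; composition with the projection $\tau_n$ accomplishes all three without any smoothness assumption on $a,\sigma$.
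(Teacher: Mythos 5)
Your localization step is sound and is essentially the paper's own Step 2 (the paper uses a continuum of translated cutoffs $\zeta^z$, $z\in\bR^d$, integrates over $z$ instead of summing, and freezes coefficients via the convex combination $a\eta^z+a(t,z)(1-\eta^z)$ rather than composition with a projection; these are cosmetic differences), and it correctly yields $\intl p\intl_2^2+\intl q\intl_1^2+E\sup_{t\le T}\|p(t,\cdot)\|_1^2\le C_1\big(\intl F\intl_0^2+E\|\phi\|_1^2+\intl p\intl_1^2+\intl q\intl_0^2\big)$ with $C_1=C_1(K,\kappa,T,\gamma)$. The gap is in your closing step. The inequality you rely on, $\intl p\intl_0^2+\intl q\intl_0^2\le C(\eta)\big(\intl F\intl_0^2+E\|\phi\|_0^2\big)+\eta C'\big(\intl p\intl_2^2+\intl q\intl_1^2\big)$ with $C'$ independent of $\eta$, does not follow from the It\^o--Young--Gronwall argument you describe. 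Since $a,\sigma$ are only uniformly continuous, the pairings $(p,a^{ij}D_{ij}p)$ and $(p,\sigma^{ik}D_iq^k)$ cannot be integrated by parts and must be estimated by $\eta\|p\|_2^2+CK^2\eta^{-1}\|p\|_0^2$ and the like; the factor $CK^2\eta^{-1}$ then multiplies $\int_t^TE\|p(s,\cdot)\|_0^2\,ds$ and therefore lands in the exponent of the Gronwall factor. The resulting coefficient of $\intl p\intl_2^2+\intl q\intl_1^2$ is of order $\eta\,T\,e^{CK^2T/\eta}$, which blows up as $\eta\to0$ and whose infimum over $\eta>0$ is of order $K^2T^2e^{CK^2T}$ --- never small for general $K,T$. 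Hence the final absorption fails and the bootstrap does not close.

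The paper closes the loop differently, and you need an argument of that kind. From the It\^o identity for $\|p(t,\cdot)\|_0^2$ one extracts only a bound for $\intl q\intl_0^2$, which sits on the favorable side of the identity so that no Gronwall is needed: $\intl q\intl_0^2\le E\|\phi\|_0^2+\intl F\intl_0^2+\eps\big(\intl p\intl_2^2+\intl q\intl_1^2\big)+C(\eps,K)\intl p\intl_1^2$. Here the $\eps$-dependent constant multiplies the harmless term $\intl p\intl_1^2$ rather than entering an exponential. Substituting into the localized estimate and absorbing the $\eps$-terms gives $\intl p\intl_2^2+\intl q\intl_1^2+E\sup_{t\le T}\|p(t,\cdot)\|_1^2\le C\big(\intl F\intl_0^2+E\|\phi\|_1^2+\intl p\intl_1^2\big)$ with a fixed constant. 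The leftover $\intl p\intl_1^2$ is then removed not by interpolating down to $L^2$ but by applying this very estimate on $[s,T]$ for each $s$: since its left-hand side dominates $E\|p(s,\cdot)\|_1^2$, one obtains $E\|p(s,\cdot)\|_1^2\le C\big(\intl F\intl_0^2+E\|\phi\|_1^2\big)+C\int_s^TE\|p(t,\cdot)\|_1^2\,dt$ with a fixed $C$, and Gronwall at the $H^1$ level gives $\intl p\intl_1^2\le Ce^{CT}\big(\intl F\intl_0^2+E\|\phi\|_1^2\big)$. This is precisely where the $E\sup$ term in the statement earns its keep; your proposal never uses it.
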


\begin{proof}
  \emph{Step 1.}
  In view of the definition of the strong solution (Definition
  \ref{defn:b1}), we know that the process $p(t,\cdot)$ is an $L^2$-valued
  semimartingale. Then applying It\^o's formula for Hilbert-valued
  semimartingales (see e.g. \cite[p. 105]{PrZa92}),
  we have
  \begin{eqnarray*}
    \begin{split}
      \|p(0,\cdot)\|_{0}^{2} = &\|\phi\|_{0}^{2}+2\int_{0}^{T}\int_{\bR^d}
      p \big[a^{ij}D_{ij}p+b^{i}D_{i}p-c p
      +\sigma^{ik}D_{i}q^{k}+\nu^kq^k
      +F\big] dx dt\\
      &~~~~ - \int_{0}^{T}\|q(t,\cdot)\|_{0}^{2}dt
      -2\int_{0}^{T}\int_{\bR^d}p q^k
      dx dW^k_t.
    \end{split}
  \end{eqnarray*}
  Taking expectations and from the Cauchy-Schwarz inequality, we have
  \begin{eqnarray}\label{leq:d6}
    \begin{split}
      \intl q \intl_{0}^{2} ~\leq ~ & E\|\phi\|_{0}^{2}+
      2 E \int_{0}^{T}\int_{\bR^d}
      p \big[a^{ij}D_{ij}p+b^{i}D_{i}p-c p
      +\sigma^{ik}D_{i}q^{k}+\nu^kq^k
      +F\big] dx dt\\
      \leq~ &E\|\phi\|_{0}^{2} + \eps\big(\intl p \intl_{2}^{2}
      +\intl q \intl_{1}^{2}\big) + C(\eps,K)\intl p \intl_{1}^{2}
      + \intl F \intl_{0}^{2},
    \end{split}
  \end{eqnarray}
  where $\eps$ is a small positive number to be specified later.
  \medskip

  \emph{Step 2.}
  In view of Assumption \ref{ass:b3}, we can
  take a small $\rho$ such that for any $(\omega,t)$ and $x,y\in \bR^d$,
  \begin{equation}\label{prp:d2}
    |a(t,x)-a(t,y)|\leq \delta,\quad |\sigma(t,x)-\sigma(t,y)|\leq \delta
  \end{equation}
  if $|x-y|\leq 4\rho$, where $\delta=\delta(\kappa,K,T)$ is taken from
  Lemma \ref{lem:d3}.

  Denote $B_{r}(z)=\{x\in \bR^d: |x-z|<r\}$.
  Then take a nonnegative function $\zeta\in C_{0}^{\infty}(\bR^{d})$ such
  that $\textrm{supp}(\zeta)\subset B_{2\rho}(0)$,
  $\zeta(x)=1$ for $|x|\leq \rho$. For any $z\in
  \mathbb{R}^d$, define
  \begin{equation}\label{def:d1}
    \zeta^{z}(x)=\zeta(x-z),\quad p^{z}(t,x)=p(t,x)\zeta^{z}(x),\quad
    q^{z}(t,x)=q(t,x)\zeta^{z}(x).
  \end{equation}
  In addition, define $\eta^{z}(x)=\zeta(\frac{x-z}{2})$. It is not hard to
  check that the functions $p^z,q^z$ satisfy the equation
  (in the sense of Definition \ref{defn:b1} (ii))
  \begin{equation}\label{eq:d4}
    dp^{z}=-\big(\tilde{a}^{ij}D_{ij}p^{z}+\tilde{\sigma}^{ik}D_{i}q^{z,k}
    +\tilde{F}\big)dt + q^{z,k}dW_{t}^{k},
  \end{equation}
  where (observe that $p^{z}=0,q^{z}=0$ whenever $\eta^{z} \neq 1$)
  \begin{eqnarray*}
    \begin{split}
      \tilde{a}^{ij}(t,x)~=~&a^{ij}(t,x)\eta^{z}(x)+a^{ij}(t,z)(1-\eta^{z}(x)),\\
      \tilde{\sigma}^{ik}(t,x)~=~&\sigma^{ik}(t,x)\eta^{z}(x)
      +\sigma^{ik}(t,z)(1-\eta^{z}(x)),\\
      \tilde{F}(t,x)~=~&(F\zeta^{z})(t,x)+(b^{i}\zeta^{z}-2a^{ij}D_{j}\zeta^{z})
      D_{i}p(t,x) \\&- (c\zeta^{z}+a^{ij}D_{ij}\zeta^{z})p(t,x)
      +(\nu^{k} \zeta^{z}-\sigma^{ik}D_{i}\zeta^{z})q^{k}(t,x).
    \end{split}
  \end{eqnarray*}
  The choice of $\rho$ shows that $\tilde{a}$ and $\tilde{\sigma}$ satisfy
  condition \eqref{con:d1} with $a_{0}(t)=a(t,z)$ and
  $\sigma_{0}(t)=\sigma(t,z)$. Since $(p,q)\in\bH^{2}\times\bH^{1}$, it is
  easy to see that $\tilde{F}\in \bH^{0}$. Therefore, from Lemma
  \ref{lem:d3}, equation \eqref{eq:d4} has a unique solution $(u,v)$ in
  the space $\bH^{2}\times\bH^{1}$. From the uniqueness of the solution, we
  know that $p^z=u$ and $q^z=v$. Note that $\textrm{supp}(p^{z}),
  \textrm{supp}(q^{z})\subset B_{2\rho}(z)$. From estimate
  \eqref{leq:d4}, we get
  \begin{eqnarray*}
    \begin{split}
      &E\int_{0}^{T}\big[\| p(t,\cdot) \|_{2,B_{\rho}(z)}^{2}
      +\| q(t,\cdot) \|_{1,B_{\rho}(z)}^{2}\big] dt
      +E\sup_{t\leq T}\|p(t,\cdot)\|_{1,B_{\rho}(z)}^{2}\\
      &\leq C \bigg\{E \|
      \phi\|_{1,B_{2\rho}(z)}^{2}+E\int_{0}^{T}\big[ \| F(t,\cdot) \|
      _{0,B_{2\rho}(z)}^{2}
      +\| p(t,\cdot) \|_{1,B_{2\rho}(z)}^{2}
      +\| q(t,\cdot) \|_{0,B_{2\rho}(z)}^{2}\big]dt\bigg\}.
      \end{split}
  \end{eqnarray*}
  where we denote by $\|\cdot\|_{n,B_{r}(z)}$ the norm of $H^{n}(B_{r}(z))$.
  Integrating this inequality with respect to all $z\in\bR^d$, we obtain that
  \begin{equation*}
    \intl p \intl_{2}^{2} +\intl q \intl_{1}^{2}
    +E\sup_{t\leq T}\|p(t,\cdot)\|_{1}^{2}
    \leq C \big{(} \intl F \intl_{0}^{2} + E \|
    \phi\|_{1}^{2}
    +\intl p \intl_{1}^{2} +\intl q \intl_{0}^{2}\big{)},
  \end{equation*}
  where the constant $C$ depends only on $K,\kappa,T$ and $\gamma$.
  Recalling inequality \eqref{leq:d6} and taking $\eps$ small enough
  (for instant, $\eps=(2C)^{-1}$), we have
  \begin{equation}\label{leq:d7}
    \intl p \intl_{2}^{2} +\intl q \intl_{1}^{2}
    +E\sup_{t\leq T}\|p(t,\cdot)\|_{1}^{2}
    \leq C\big(\intl F \intl_{0}^{2}+E \|\phi\|_{1}^{2}
    +\intl p \intl_{1}^{2}\big).
  \end{equation}
  Observe that the above estimate also holds if we replace the
  initial time zero by any $s\in[0,T)$, which means
  \begin{equation*}
    E \|p(s,\cdot)\|_{1}^{2} \leq
    C \bigg(\intl F \intl_{0}^{2}+E \|\phi\|_{1}^{2}
    +E\int_{s}^{T}\| p(t,\cdot) \|_{1}^{2}dt\bigg),
  \end{equation*}
  and this along with the Gronwall inequality yields that
  \begin{equation*}
    \intl p\intl_{1}^{2} = \int_{0}^{T}E \|p(s,\cdot)\|_{1}^{2}
    \leq  C e^{CT} \big(\intl F \intl_{0}^{2}+E \|\phi\|_{1}^{2}\big).
  \end{equation*}
  Recalling \eqref{leq:d7}, the proof is complete.
\end{proof}

\begin{proof}[Proof of Theorem \ref{thm:b1}]
  The uniqueness of the strong solution of equation \eqref{eq:a2} is
  implied by estimate \eqref{leq:d5}. We shall use the
  method of continuity to prove the existence.

  Define
  \begin{eqnarray*}
    \begin{split}
      &\cL_{0} = a^{ij}(t,0)D_{ij}+b^{i}(t,x)D_{i}-c(t,x),~~~
      &\cM^{k}_{0} = \sigma^{ik}(t,0)D_{i}+\nu^{k}(t,x),\\
      &\cL_{1} = a^{ij}(t,x)D_{ij}+b^{i}(t,x)D_{i}-c(t,x),~~~
      &\cM^{k}_{1} = \sigma^{ik}(t,x)D_{i}+\nu^{k}(t,x).
    \end{split}
  \end{eqnarray*}
  For each $\lambda\in[0,1]$, set
  \begin{equation*}
    \cL_{\lambda} = (1-\lambda)\cL_{0} + \lambda\cL_{1} ,~~~
    \cM^{k}_{\lambda} = (1-\lambda)\cM^{k}_{0} + \lambda\cM^{k}_{1}.
  \end{equation*}
  Consider the following equation
  \begin{equation}\label{eq:d5}
    dp=-(\cL_{\lambda}p+\cM^{k}_{\lambda}q^{k}+F)dt+q^{k}dW^{k}_{t},
    ~~~~~~p\big|_{t=T}=\phi.
  \end{equation}
  Observe that the coefficients of equation \eqref{eq:d5} satisfy the conditions
  of Theorem \ref{thm:b1} with the same $K,\kappa$ and $\gamma$.
  Hence a priori estimate \eqref{leq:d5} holds
  for equation \eqref{eq:d5} for each $\lambda\in[0,1]$
  with the same constant $C$ (i.e., independent of
  $\lambda$).

  Assume that for a $\lambda=\lambda_{0}\in[0,1]$, equation \eqref{eq:d5} is
  solvable, i.e., it has
  a unique solution $(p,q)\in \bH^{2}\times\bH^1$ for any $F\in \bH^0$ and
  any $\phi\in L^{2}(\Omega,\sF_{T},H^{1})$. For other $\lambda\in[0,1]$, we
  can rewrite \eqref{eq:d5} as
  \begin{eqnarray*}
    \begin{split}
      dp=-\big\{\cL_{\lambda_{0}}p+\cM^{k}_{\lambda_{0}}q^{k}
      +(\lambda-\lambda_{0})\big[(\cL_{1}-\cL_{0})p
      +(\cM^{k}_{1}-\cM^{k}_{0})q^{k}\big]
      +F\big\}dt+q^{k}dW^{k}_{t}.
    \end{split}
  \end{eqnarray*}
  Thus for any $(u,v)\in \bH^{2}\times\bH^1$, the equation
  \begin{eqnarray*}
    \begin{split}
      dp=-\big\{\cL_{\lambda_{0}}p+\cM^{k}_{\lambda_{0}}q^{k}
      +(\lambda-\lambda_{0})\big[(\cL_{1}-\cL_{0})u
      +(\cM^{k}_{1}-\cM^{k}_{0})v^{k}\big]
      +F\big\}dt+v^{k}dW^{k}_{t}
    \end{split}
  \end{eqnarray*}
  with the terminal condition $p|_{t=T}=\phi$ has a unique
  solution $(p,q)\in \bH^{2}\times\bH^1$. By denoting $T(u,v)=(p,q)$,
  we define a linear operator
  $$T:~~\bH^{2}\times\bH^1~\rrow~\bH^{2}\times\bH^1.$$
  Then from estimate \eqref{leq:d5}, we can easily obtain that for any
  $(u_{i},v_{i})\in \bH^{2}\times \bH^{1},~i=1,2$,
  \begin{equation}\label{leq:d8}
    \|T(u_1-u_2,v_1-v_2)\|_{2,1}^{2}\leq C |\lambda-\lambda_{0}|
    \|(u_1-u_2,v_1-v_2)\|_{2,1}^{2},
  \end{equation}
  where we denote $$\|(u,v)\|_{2,1}^{2}=\intl u \intl _{2}^{2}
  +\intl v \intl _{1}^{2}.$$
  Recall that the constant $C$ in \eqref{leq:d8} is independent of $\lambda$.
  Set $\theta = (2C)^{-1}$. Then the operator is contraction in
  $\bH^{2}\times \bH^{1}$ as long as $|\lambda-\lambda_{0}|\leq\theta$,
  which implies that equation \eqref{eq:d5} is solvable if
  $|\lambda-\lambda_{0}|\leq\theta$.

  The solvability of equation
  \eqref{eq:d5} for $\lambda=0$ has been given by Proposition \ref{lem:d2}.
  Starting from $\lambda=0$, one can reach $\lambda=1$ in finite steps, and
  this finishes the proof of solvability of equation \eqref{eq:a2}.

  The assertion that $p\in C([0,T],L^{2})\cap L^{\infty}([0,T],H^{1})~(a.s.)$
  easily follows from Lemma \ref{lem:c1} and estimate \eqref{leq:d5}. The
  proof of Theorem \ref{thm:b1} is complete.
\end{proof}

\section{An application: a comparison theorem}

It is well-known that the comparison theorem plays an important role in the
theory of PDEs and BSDEs. Thus a comparison theorem for BSPDEs is reasonably
supposed to be equally important in the research of BSPDEs. Ma-Yong
\cite{MaYo99} obtains some comparison theorems for strong solutions of BSPDEs
by using It\^o's formula, and discuss some potential applications. In this
section, we deduce a comparison theorem for the strong solution of equation
\eqref{eq:a2} based on the results in \cite{MaYo99} while under much weaker
conditions.
\medskip

Our main result in this section is the following

\begin{thm}\label{thm:e1}
  Let the conditions of Theorem \ref{thm:b1} be satisfied.
  Suppose for any $(\omega,t)$,
  $F(t,\cdot)\geq 0$ and $\phi\geq 0$.
  Then $p(t,\cdot)\geq 0$ a.s. for every $t\in [0,T]$.
\end{thm}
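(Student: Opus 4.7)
The plan is to reduce to the Ma--Yong comparison theorem \cite{MaYo99}, which is stated under stronger regularity on the top-order coefficients, by mollifying $a$ and $\sigma$ in the spatial variable and passing to the limit with the aid of estimate \eqref{leq:b2}. Fix a standard mollifier $\{\rho_\eps\}$ on $\bR^d$ and set
\[
a^{ij}_\eps(\omega,t,x) = \int_{\bR^d} a^{ij}(\omega,t,y)\rho_\eps(x-y)\,dy,\quad
\sigma^{ik}_\eps(\omega,t,x) = \int_{\bR^d} \sigma^{ik}(\omega,t,y)\rho_\eps(x-y)\,dy.
\]
These mollified coefficients are smooth in $x$, bounded by $K$, super-parabolic with the same $\kappa$, and satisfy Assumption \ref{ass:b3} with the \emph{same} modulus $\gamma$. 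Moreover, Assumption \ref{ass:b3} yields the uniform pointwise decay
\[
\sup_{(\omega,t,x)}\big(|a^{ij}_\eps - a^{ij}| + |\sigma^{ik}_\eps - \sigma^{ik}|\big) \rrow 0 \quad (\eps \rrow 0).
\]

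Let $(p_\eps, q_\eps)$ be the unique strong solution of equation \eqref{eq:a2} with $(a,\sigma)$ replaced by $(a_\eps,\sigma_\eps)$, guaranteed by Theorem \ref{thm:b1}. Since $a_\eps,\sigma_\eps$ are now smooth in $x$, the hypotheses of the comparison theorem of \cite{MaYo99} are fulfilled, and hence $p_\eps(t,\cdot) \geq 0$ a.s.\ for every $t \in [0,T]$. Next, observe that $(p_\eps - p,\, q_\eps - q)$ solves \eqref{eq:a2} with coefficients $(a_\eps,\sigma_\eps,b,c,\nu)$, zero terminal data, and free term
\[
\hat F_\eps = (a^{ij}_\eps - a^{ij})D_{ij}p + (\sigma^{ik}_\eps - \sigma^{ik})D_i q^k \in \bH^0.
\]
The constant in estimate \eqref{leq:b2} depends only on $K,\kappa,T,\gamma$ and is therefore uniform in $\eps$, so
\[
\intl p_\eps - p \intl_2^2 + \intl q_\eps - q \intl_1^2 + E\sup_{t \leq T}\|p_\eps(t,\cdot) - p(t,\cdot)\|_1^2 \leq C\intl \hat F_\eps \intl_0^2 \rrow 0
\]
by dominated convergence, since $D^2 p, Dq \in \bH^0$ and the coefficient differences are uniformly bounded and decay pointwise.

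Consequently, along some subsequence, $p_{\eps_n}(t,\cdot) \rrow p(t,\cdot)$ in $H^1$ almost surely, uniformly in $t \in [0,T]$, and passing to the limit in the inequality $p_{\eps_n}(t,\cdot) \geq 0$ gives the conclusion. The principal technical obstacle is guaranteeing that the constants in both the Ma--Yong comparison and estimate \eqref{leq:b2} are uniform in $\eps$; this is precisely why a modulus of continuity on $(a,\sigma)$ (Assumption \ref{ass:b3}), rather than pointwise differentiability, is the right regularity requirement here---it is preserved under mollification, while Lipschitz norms of $a_\eps,\sigma_\eps$ would blow up as $\eps \rrow 0$.
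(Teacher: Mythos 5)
Your approximation scheme is exactly the one the paper uses: mollify $a$ and $\sigma$ only, observe that the modulus of continuity in Assumption \ref{ass:b3} is inherited by the mollifications and yields uniform convergence $a_\eps\rrow a$, $\sigma_\eps\rrow\sigma$, solve the regularized equation by Theorem \ref{thm:b1}, write the difference as a solution of a BSPDE with zero terminal datum and a free term that tends to $0$ in $\bH^0$, and pass to the limit using the $\eps$-uniform constant in \eqref{leq:b2}. (Your choice to put $D_{ij}p$ and $D_iq$ of the \emph{fixed} solution into the free term, rather than $D_{ij}p_\eps$ and $D_iq_\eps$ as the paper does, is a harmless and slightly cleaner variant, and your use of the $E\sup_t\|\cdot\|_1^2$ bound to get convergence for every $t$ is fine.)

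The genuine gap is the sentence ``Since $a_\eps,\sigma_\eps$ are now smooth in $x$, the hypotheses of the comparison theorem of \cite{MaYo99} are fulfilled.'' Smoothing $a$ and $\sigma$ does not put the regularized equation within the scope of \cite{MaYo99}: the comparison theorems there are proved for strong (indeed sufficiently regular) solutions under standing smoothness and boundedness assumptions on \emph{all} the coefficients and on the data, whereas in your regularized problem $b$, $c$, $\nu$ are still merely bounded and measurable, $F\in\bH^0$ and $\phi\in L^2(\Omega,\sF_T,H^1)$. This is precisely the point at which the paper does extra work: it proves its own comparison result (Lemma \ref{lem:e1}) for strong solutions assuming only that $Da^{ij}$ and $D\sigma^{ik}$ are bounded on top of the hypotheses of Theorem \ref{thm:b1}. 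The proof applies It\^o's formula to $\int h_\eps(p)\,dx$ for a $C^2$ approximation $h_\eps$ of $r\mapsto (r^-)^2$, uses the divergence-form rewriting (available because $Da,D\sigma$ are bounded) together with super-parabolicity to control the quadratic terms $2a^{ij}D_ipD_jp+2\sigma^{ik}q^kD_ip+|q|^2$ from below, and concludes with Gronwall. Without supplying this lemma (or an equivalent argument), the nonnegativity of $p_\eps$ is unsupported, and the rest of your limiting argument has nothing to pass to the limit. Everything else in your write-up is sound.
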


The proof of the above theorem needs the following lemma. In what follows, we
denote $a^- = -(a\wedge 0) $ for $a\in \bR$.

\begin{lem}\label{lem:e1}
  Let the conditions of Theorem \ref{thm:b1} be satisfied. In addition,
  assume that the functions
  $Da^{ij}$ and $D\sigma^{ik}$ are bounded (by a constant $L$).
  Let $(p,q)$ be the strong solution of equation
  \eqref{eq:a2}. Then for some constant $C$,
  \begin{eqnarray}\label{leq:e1}
    \begin{split}
      E \int_{\bR^d}[p(t,x)^-]^{2}dx \leq e^{C(T-t)}\bigg\{
      E\int_{\bR^d} [\phi(x)^-]^{2}dx
      + E\int_{t}^{T}\int_{\bR^d}[F(s,x)^-]^{2}dxds\bigg\}.
    \end{split}
  \end{eqnarray}
\end{lem}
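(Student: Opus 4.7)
The plan is to apply Itô's formula to the squared $L^{2}$-norm of $p^{-}$ and convert the resulting identity into a Gronwall-type inequality. Since $(p,q)\in\bH^{2}\times\bH^{1}$ is a strong solution, $p(t,\cdot)$ is an $L^{2}$-valued semimartingale with differential $dp=-A\,dt+q^{k}dW^{k}_{t}$, where $A=a^{ij}D_{ij}p+b^{i}D_{i}p-cp+\sigma^{ik}D_{i}q^{k}+\nu^{k}q^{k}+F$. The function $f(r)=(r^{-})^{2}$ is $C^{1}$ with $f'(r)=-2r^{-}$ and $f''(r)=2\mathbf{1}_{\{r<0\}}$ (a.e.); by smooth approximation of $f$ (and exploiting $p\in C([0,T],L^{2})$, $q\in\bH^{1}$), Itô's formula yields
\begin{equation*}
\|p^{-}(t)\|_{0}^{2}=\|\phi^{-}\|_{0}^{2}-2\int_{t}^{T}\!\!\int_{\bR^{d}}p^{-}A\,dx\,ds-\int_{t}^{T}\!\!\int_{\bR^{d}}\mathbf{1}_{\{p<0\}}|q|^{2}dx\,ds+2\int_{t}^{T}\!\!\int_{\bR^{d}}p^{-}q^{k}dx\,dW^{k}_{s}.
\end{equation*}
Taking expectations kills the martingale term (the integrand is in $L^{2}$ because $p\in\bH^{2}$, $q\in\bH^{1}$).

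The heart of the argument is the pointwise identity $D_{i}(p^{-})=-\mathbf{1}_{\{p<0\}}D_{i}p$, which gives $\mathbf{1}_{\{p<0\}}D_{i}p\,D_{j}p=D_{i}(p^{-})D_{j}(p^{-})$. Using the hypothesis that $Da^{ij}$ and $D\sigma^{ik}$ are bounded by $L$, I integrate by parts the two second-order contributions:
\begin{eqnarray*}
-2\!\int p^{-}a^{ij}D_{ij}p\,dx &=& -2\!\int a^{ij}D_{i}(p^{-})D_{j}(p^{-})\,dx+2\!\int p^{-}D_{i}a^{ij}D_{j}p\,dx,\\
-2\!\int p^{-}\sigma^{ik}D_{i}q^{k}dx &=& -2\!\int \sigma^{ik}D_{i}(p^{-})q^{k}dx \cdot(-1)\cdots
\end{eqnarray*}
(analogously the $\sigma$ term produces $2\int D_{i}(p^{-})\sigma^{ik}q^{k}dx+2\int p^{-}D_{i}\sigma^{ik}q^{k}dx$). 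Now for the principal quadratic form in $(D(p^{-}),q)$: I split $-\int\mathbf{1}_{\{p<0\}}|q|^{2}$ as $-(1-\mu)\int\mathbf{1}_{\{p<0\}}|q|^{2}-\mu\int\mathbf{1}_{\{p<0\}}|q|^{2}$, apply $2ab\le\alpha a^{2}+\alpha^{-1}b^{2}$ with $\alpha>1$ close to $1$ to the cross term $2D_{i}(p^{-})\sigma^{ik}q^{k}$, and obtain
\begin{equation*}
-2a^{ij}D_{i}(p^{-})D_{j}(p^{-})+\alpha\sigma^{ik}\sigma^{jk}D_{i}(p^{-})D_{j}(p^{-})-(1-\alpha^{-1})\mathbf{1}_{\{p<0\}}|q|^{2}\le-\tfrac{\kappa}{2}|D(p^{-})|^{2}-\mu\mathbf{1}_{\{p<0\}}|q|^{2},
\end{equation*}
where the first inequality uses the super-parabolic condition with $\alpha-1$ small enough that $2a^{ij}-\alpha\sigma^{ik}\sigma^{jk}\ge(\kappa/2)I$.

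The remaining lower-order terms are controlled by Cauchy–Schwarz: $-2\int p^{-}b^{i}D_{i}p=2\int p^{-}b^{i}D_{i}(p^{-})$; $-2\int p^{-}(-cp)dx=-2\int c(p^{-})^{2}dx$; $-2\int p^{-}\nu^{k}q^{k}$ and $2\int p^{-}D_{i}\sigma^{ik}q^{k}$, and the two $Da$, $D\sigma$ remainders all produce bounded multiples of either $\eps|D(p^{-})|^{2}$, $\eps\mathbf{1}_{\{p<0\}}|q|^{2}$, or $C_{\eps}(p^{-})^{2}$, which are absorbed into the coercive part by choosing $\eps$ small relative to $\kappa$ and $\mu$. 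For the source term I use the crucial sign observation $-2p^{-}F=-2p^{-}F^{+}+2p^{-}F^{-}\le(p^{-})^{2}+(F^{-})^{2}$. Collecting everything yields
\begin{equation*}
E\|p^{-}(t)\|_{0}^{2}\le E\|\phi^{-}\|_{0}^{2}+C\int_{t}^{T}E\|p^{-}(s)\|_{0}^{2}\,ds+E\int_{t}^{T}\|F^{-}(s)\|_{0}^{2}\,ds,
\end{equation*}
from which Gronwall's inequality delivers \eqref{leq:e1}. The main obstacle is the coercivity bookkeeping: one must reserve enough of the Itô correction $-\int\mathbf{1}_{\{p<0\}}|q|^{2}$ to simultaneously (i) dominate the cross term $2\int D_{i}(p^{-})\sigma^{ik}q^{k}$ via the super-parabolic condition and (ii) leave a strictly negative remainder to absorb the $\eps\mathbf{1}_{\{p<0\}}|q|^{2}$ terms arising from $p^{-}D_{i}\sigma^{ik}q^{k}$ and $p^{-}\nu^{k}q^{k}$; rigorously justifying Itô's formula for the merely Lipschitz function $f$ against the $L^{2}$-valued semimartingale $p$ (via standard smoothing of $f$ and passage to the limit) is a further point requiring care.
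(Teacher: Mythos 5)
Your proposal is correct and follows essentially the same route as the paper: It\^o's formula applied to a smoothed version of $r\mapsto (r^-)^2$ (the paper writes down an explicit $C^2$ approximation $h_\eps$ and passes to the limit, which is exactly the smoothing step you defer), integration by parts made legitimate by the boundedness of $Da^{ij}$ and $D\sigma^{ik}$, coercivity of the quadratic form in $(D(p^-),q)$ via the super-parabolic condition with a small margin $\alpha-1$, the sign trick $-2p^-F\le (p^-)^2+(F^-)^2$, and Gronwall. No substantive differences.
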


\begin{proof}
  Define a function $h(r):\bR\rightarrow [0,\infty)$ as follows:
  \begin{equation}
    h(r) =
    \left\{\begin{array}{ll}
      r^2, & r\leq -1,\\
      (6r^3+8r^4+3r^5)^2, & -1 \leq r \leq 0,\\
      0, & r\geq 0.
    \end{array}\right.
  \end{equation}
  One can directly check that $h$ is $C^2$ and
  $$h(0)=h'(0)=h''(0)=0,\ h(-1)=1,\ h'(-1)=-2,\ h''(-1)=2.$$
  For any $\eps > 0$, let $h_{\eps}(r)=\eps^{2}h(r/\eps)$. The function
  $h$ has the following properties:
  \begin{eqnarray*}
  \begin{split}
    &\lim_{\eps\rrow 0}h_{\eps}(r)=(r^-)^2,~~~
    \lim_{\eps\rrow 0}h_{\eps}'(r)=-2r^-,\quad \textrm{uniformly};\\
    &|h_{\eps}''(r)|\leq C,\quad \forall \eps > 0,r\in\bR;~~~~~
    \lim_{\eps\rrow 0}h_{\eps}''(r)= \left\{\begin{array}{ll}
      2,& r<0,\\
      0,& r>0.
    \end{array}\right.
    \end{split}
  \end{eqnarray*}

  Since $Da^{ij}$ and $D\sigma^{ik}$ are bounded, equation \eqref{eq:a2} can
  be written into divergence form. Then applying It\^o's formula for
  Hilbert-valued semimartingales (see e.g. \cite[p. 105]{PrZa92}) to
  $h_{\eps}(p(t,\cdot))$, and from Green's formula,
  we obtain that
  \begin{eqnarray*}
    \begin{split}
      &E\int_{\bR^d}h_{\eps}(\phi(x))dx - E\int_{\bR^d}h_{\eps}(p(t,x))dx\\
      &=E\int_{t}^{T}\int_{\bR^d} \bigg\{ -h_{\eps}'(p)D_{i}(a^{ij}D_{j}p
      +\sigma^{ik}q^{k})-h_{\eps}'(p)\big[(b^{i}-D_{j}a^{ij})D_{i}p\\
      &~~~~~~~~-c p + (\nu^{k}-D_{i}\sigma^{ik})q^{k}+F\big]
      + \frac{1}{2}h_{\eps}''(p)|q|^{2}\bigg\}dx dt\\
      &=E\int_{t}^{T}\int_{\bR^d} \bigg\{ \frac{1}{2}h_{\eps}''(p)
      \big(2a^{ij}D_{i}pD_{j}p+2\sigma^{ik}q^{k}D_{i}p+|q|^{2}
      \big)\\
      &~~~~~~~~-h_{\eps}'(p)\big[(b^{i}-D_{j}a^{ij})D_{i}p
      -c p + (\nu^{k}-D_{i}\sigma^{ik})q^{k}+F\big]
      \bigg\}dx dt.
    \end{split}
  \end{eqnarray*}
  Let $\eps\rrow 0$ and from Lebesgue's Dominated Convergence Theorem, we
  have
  \begin{eqnarray*}
    \begin{split}
      &E\int_{\bR^d}[\phi(x)^{-}]^{2}dx - E\int_{\bR^d}[(p(t,x)^-]^{2}dx\\
      &=E\int_{t}^{T}\int_{\bR^d} \chi_{\{p\leq 0\}}\bigg\{
      \big(2a^{ij}D_{i}pD_{j}p+2\sigma^{ik}q^{k}D_{i}p+|q|^{2}
      \big)\\
      &~~~~~~~~-2p\big[(b^{i}-D_{j}a^{ij})D_{i}p
      -c p + (\nu^{k}-D_{i}\sigma^{ik})q^{k}+F\big]
      \bigg\}dx dt.
    \end{split}
  \end{eqnarray*}
  Since
  \begin{eqnarray*}
    \begin{split}
      2a^{ij}D_{i}pD_{j}p+2\sigma^{ik}q^{k}D_{i}p+|q|^{2}
      &\geq 2a^{ij}D_{i}pD_{j}p-(1+\delta)|\sigma^{i}D_{i}p|^{2}
      +\frac{\delta}{1+\delta}|q|^{2}\\
      &\geq [-2\delta K + (1+\delta)\kappa]|D p|^{2}
      +\frac{\delta}{1+\delta}|q|^{2},\\
    \end{split}
  \end{eqnarray*}
  $$-p\big[(b^{i}-D_{j}a^{ij})D_{i}p
  -c p + (\nu^{k}-  D_{i}\sigma^{ik})q^{k}\big]
  \geq -\delta_{1}(|Dp|^{2}+|q|^{2}) - C(K,L)\delta_{1}^{-1}|p|^{2}.$$
  Taking $\delta$ and $\delta_{1}$ small enough (such that $\delta_1
  =\min\{-2\delta K + (1+\delta)\kappa,\frac{\delta}{1+\delta}\}>0$),
  we have
  \begin{eqnarray*}
    \begin{split}
      &E\int_{\bR^d}[\phi(x)^{-}]^{2}dx - E\int_{\bR^d}[(p(t,x)^-]^{2}dx\\
      &\geq E\int_{t}^{T}\int_{\bR^d} \chi_{\{p\leq 0\}}\big[
      - C(\kappa,K,L)|p|^{2} - 2 p F
      \big]dx dt\\
      &\geq E\int_{t}^{T}\int_{\bR^d} \big[
      - C(\kappa,K,L)|p^{-}|^{2} - 2 p^{-} F^{-}
      \big]dx dt\\
      &\geq E\int_{t}^{T}\int_{\bR^d} \big[
      - C(\kappa,K,L)|p^{-}|^{2} - |F^{-}|^{2}
      \big]dx dt,
    \end{split}
  \end{eqnarray*}
  and this along with the Gronwall inequality implies inequality
  \eqref{leq:e1}.
\end{proof}

\begin{proof}[Proof of Theorem \ref{thm:e1}]
  Fix a nonnegative function $\zeta\in C^{\infty}_{0}(\bR^{d})$
  such that $\textrm{supp}\zeta \subset B_{1}(0),~\int_{\bR^{d}}\zeta = 1$.
  Define $\zeta_{n}(x)=n^{d}\zeta(nx)$. For
  $\vf=a^{ij},\sigma^{ik}$, we define
  $$\vf_{n}(\omega,t,x)=(\vf*\zeta_{n})(\omega,t,x)
  =\int_{\bR^d} \vf(\omega,t,x-y)\zeta_{n}(y)dy.$$
  It is clear that $\vf_{n}(\omega,t,\cdot)\in C^{\infty}(\bR^{d})$ for any
  $(\omega,t,x)$.
  Moreover, we have that $|\vf_{n}|\leq K$ and for any $(\omega,t,x)$,
  $$|D \vf_{n}(\omega,t,x)|=\bigg|\int_{\bR^d}
  \vf(\omega,t,x-y)D\zeta_{n}(y)dy\bigg|\leq C(n)K.$$

  It is not hard to check that $a_{n}$ and $\sigma_{n}$ satisfy Assumption
  \ref{ass:b3}. Indeed, for any $(\omega,t)$ and $x,y\in \bR^d$, we have
  \begin{equation*}
    |\vf_{n}(\omega,t,x)-\vf_{n}(\omega,t,y)|
    \leq \int_{\bR^d}|\vf(\omega,t,x-z)-\vf(\omega,t,y-z)|\zeta_{n}(z)dz
    \leq \gamma(|x-y|).
  \end{equation*}

  We also claim that as $n\rrow \infty$,
  \begin{equation}\label{prp:e1}
    \vf_{n}(\omega,t,x)\rrow \vf(\omega,t,x),~~~\textrm{uniformly w.r.t.}~~
    (\omega,t,x).
  \end{equation}
  Indeed, for any $(\omega,t,x)\in\Omega\times[0,T]\times\bR^d$, we have
  \begin{eqnarray*}
    \begin{split}
      |\vf_{n}(\omega,t,x)-\vf(\omega,t,x)|&=\int_{\bR^d}
      |\vf(\omega,t,x-y)-\vf(\omega,t,x)|\zeta_{n}(y)dy\\
      &\leq \int_{|y|\leq 1/n}\gamma(|y|)\zeta_{n}(y)dy
      \leq \gamma(1/n)\rrow 0,
    \end{split}
  \end{eqnarray*}
  as $n\rrow 0$, and this proves our claim.

  Furthermore, one can easily check that $a_{n}$ and $\sigma_{n}$
  satisfy the super-parabolic condition (with $\kappa/2$ and $2K$) when $n$ is
  large enough.

  Therefore, in view Theorem \ref{thm:b1}, the following equation (for each $n$)
  \begin{equation*}
    dp_{n}=-\big(a_{n}^{ij}D_{ij}p_{n}+b^{i}D_{i}p_{n}-cp_{n}
    +\sigma_{n}^{ik}D_{i}q_{n}^{k}+\nu^{k}q_{n}^{k}+F\big) dt
    +q_{n}^{k}dW_{t}^{k},~~~~p_{n}\big|_{t=T}=\phi
  \end{equation*}
  has a unique strong solution $(p_{n},q_{n})\in \bH^{2}\times\bH^{1}$, such
  that
  \begin{equation}\label{leq:e2}
    \intl p_{n} \intl_{2}^{2} +\intl q_{n} \intl_{1}^{2}
    +E\sup_{t\leq T}\|p_{n}(t,\cdot)\|_{1}^{2}
    \leq C\big{(} \intl F \intl_{0}^{2} + E \|
    \phi\|_{1}^{2}\big{)},
  \end{equation}
  where the constant $C$ depends only on $K,\kappa,T$ and the function
  $\gamma$, but is independent of $n$.
  It is easy to check that the function pair $(p-p_{n},q-q_{n})$ satisfies
  the following equation
  \begin{equation}
    du=-\big(a^{ij}D_{ij}u+b^{i}D_{i}u-cu
    +\sigma^{ik}D_{i}v^{k}+\nu^{k}v^{k} + F_{n}\big) dt
    +v^{k}dW_{t}^{k},~~~~u\big|_{t=T}=0
  \end{equation}
  with the unknown functions $u$ and $v$, where
  \begin{equation*}
    F_{n}=(a^{ij}-a_{n}^{ij})D_{ij}p_{n}
    +(\sigma^{ik}-\sigma_{n}^{ik})D_{i}q_{n}^{k}.
  \end{equation*}
  In view of \eqref{prp:e1} and \eqref{leq:e2}, we have
  $$\intl F_{n} \intl_{0}\rrow 0,~~~~\textrm{as}~~n\rrow\infty,$$
  and this along with estimate \eqref{leq:b2} implies that
  $$p_{n}\rrow p,~~~~\textrm{strongly in}~ \bH^{0}.$$
  On the other hand, it follows from Lemma \ref{lem:e1} that
  $p_{n}(t,\cdot)\geq 0$ a.s. for every $t\in[0,T]$. Hence we
  get $p(t,\cdot)\geq 0$ a.s. for every $t\in[0,T]$.
  The proof is complete.
\end{proof}

%

\medskip

\noindent\textbf{Acknowledgements}~ It is our great pleasure to thank
Professor Shanjian Tang for constructive suggestions.

\bibliographystyle{plain}

\end{document}